\newcommand{\vol}{\operatorname{vol}}
\newcommand{\NE}{\operatorname{NE}}
\newcommand{\Pic}{\operatorname{Pic}}
\newcommand{\codim}{\operatorname{codim}}
\newcommand{\Nef}{\operatorname{Nef}}
\newcommand{\ph}{\varphi}
\newcommand{\w}{\widetilde}
\newcommand{\ma}{\mathcal}
\newcommand{\ol}{\mathcal{O}}
\newcommand{\pr}{\mathbb{P}}
\newcommand{\R}{\mathbb{R}}
\newcommand{\Z}{\mathbb{Z}}
\newcommand{\N}{\mathcal{N}_1}
\newcommand{\Aut}{\operatorname{Aut}}
\newcommand{\xdasharrow}[2][->]{
\tikz[baseline=-\the\dimexpr\fontdimen22\textfont2\relax]{
\node[anchor=south,font=\scriptsize, inner ysep=1.5pt,outer xsep=2.2pt](x){#2};
\draw[shorten <=3.4pt,shorten >=3.4pt,dashed,#1](x.south west)--(x.south east);
}
}
\newtheorem{thm}{Theorem}[section]
\newtheorem{corollary}[thm]{Corollary}
\newtheorem{proposition}[thm]{Proposition}
\newtheorem{lemma}[thm]{Lemma}
\newtheoremstyle{remstyle}
  {}
  {}
  {}
  {}
  {\bfseries}
  {.}
  { }
  {}
\theoremstyle{remstyle}
\newtheorem{remark}[thm]{Remark}
\newtheorem{definition}[thm]{Definition}
\newtheorem{step}[thm]{Step}
\title{\small K-polystability of Fano 4-folds with large Lefschetz defect}
\author{Eleonora A.\ Romano}
\address{E.\ A.\ Romano:  Universit\`a di Genova, Dipartimento di Matematica, Dipartimento di eccellenza 2023--2027, via Dodecaneso 35, 16146 Genova - Italy}
\email{eleonoraanna.romano@unige.it}
\author{Saverio A.\ Secci}
\address{S.\ A.\ Secci: Universit\`a di Milano, Dipartimento di Matematica, via Saldini 50, 20133 Milano - Italy}
\curraddr{SISSA - Scuola Internazionale Superiore di Studi Avanzati, Via Bonomea 265, 34136 Trieste, Italy}
\email{ssecci@sissa.it}
\date{\today}
\subjclass[2020]{14J45, 14J35, 32Q20}
\begin{document}
\maketitle

\begin{abstract}
In this paper, we investigate K-polystability on smooth complex Fano 4-folds with Lefschetz defect at least 2, focusing on the case of Lefschetz defect 3 and on Casagrande-Druel Fano 4-folds with Lefschetz defect 2. We show that exactly 5 of the 19 families of Fano 4-folds with Lefschetz defect 3 are K-polystable. Moreover, among 175 families of Casagrande-Druel Fano 4-folds with Lefschetz defect 2, we prove that 5 are K-polystable, while 132 are K-unstable.
\end{abstract}

\section{Introduction}

The notion of K-stability was first introduced in \cite{Tian97} as a criterion to characterize the existence of a K\"ahler-Einstein metric on complex Fano varieties, and has been later formulated using purely algebro-geometric terms in \cite{Don02}. Nowadays, by the celebrated works \cite{Chen2015, Tian2015}, it is well known that a complex smooth Fano variety admits a K\"ahler-Einstein metric if and only if it is K-polystable. 

This correspondence links together differential and complex algebraic geometry, and it represents one of the main motivations to investigate K-polystability of Fano varieties. Moreover, the condition of K-stability has been successfully used to construct moduli spaces of Fano varieties, thus increasing its relevance within modern algebraic geometry (see \cite[$\S$2]{Xu2021} and references therein). We refer to \cite{Xu2021} for the original definitions of K-stability involving $\mathbb{C}^*$-degenerations of Fano varieties, and for a survey on this topic from an algebro-geometric viewpoint. More recently, in \cite{Boucksom17} valuation methods have been introduced to reinterpret one parameter group degenerations: these  new techniques gave a fundamental development to the algebraic theory of K-stability, due to equivalent and easier ways to test K-stability notions in many situations, such as the computation of the $\beta$-invariant of divisors over the target variety (see \cite{Fujita19, Li17}). Indeed, the $\beta$-invariant (see Definition \ref{def:beta}) may be explicitly computed for many classes of Fano varieties whose structure of divisors in their birational models is well understood. 

The situation is completely known for del Pezzo surfaces (see Corollary \ref{cor_delPezzo}), while we refer to \cite{Araujo23} for the case of Fano 3-folds and for a general and updated literature on this topic.  

In this paper, we use valuation methods related to the $\beta$-invariant to study K-polystability of Fano 4-folds with Lefschetz defect $\delta\geq 3$ and some families of Fano 4-folds with $\delta=2$. We refer to \cite{Casa12} for the definition of the Lefschetz defect of a Fano manifold (Definition \ref{def:Lefschetz}) and for the classification when $\delta\geq 4$.

The classification of smooth Fano 4-folds with $\delta=3$ has been partially carried out in \cite{CR} and then completed in \cite[Proposition 1.5]{CRS}. Instead there is no complete classification of smooth Fano 4-folds with $\delta=2$ yet. By \cite[Theorem 1.8]{CS24} we know that smooth Fano 4-folds with $\delta=2$ have Picard number $\rho\leq 6$ and this bound is sharp. In this paper, among smooth Fano 4-folds with $\delta=2$, we are interested in those obtained through the construction introduced by Casagrande and Druel \cite{CD}. These Fano 4-folds have been classified in \cite{Secci23} when they have Picard number $\rho=3$, and by \cite{Pier25} when $\rho\in\{4,5,6\}$: they form $175$ families.

From the viewpoint of K-polystability, the case of Fano 4-folds with $\delta\geq 4$ easily follows from known results. Indeed, by \cite[Theorem 3.3]{Casa12} these varieties are products of two del Pezzo surfaces, and applying \cite[Theorem 1.1]{Zhuang20} (Lemma \ref{lem:product} below) we see that a product of Fano varieties is K-polystable if and only if both of its factors are (see Remark \ref{rem:delta4} for details).

Thus, it arises our motivation to study the subsequent case of Fano 4-folds having Lefschetz defect $\delta=3$: among the possible 19 families of such Fano 4-folds classified in \cite{CR} and \cite{CRS}, we establish which ones are K-polystable. We state our conclusions in the following result. Note that $Y$ from item (ii) below is $\ma N^{\circ} 5.1$ in the notation of \cite{Araujo23}.
\begin{thm} \label{thm:main} Let $X$ be a smooth Fano 4-fold with $\delta_X \geq 3$. Denote by $F'$ (resp.\ $F$) the blow-up of $\mathbb{P}^2$ along two (resp.\ three non-collinear) points. Then,
\begin{itemize}
\item [(i)] if $\delta_X \geq 4$, then $X$ is K-polystable if and only if $X \ncong S \times  \mathbb{F}_1$, $X\ncong S \times F'$, with $S$ a del Pezzo surface having $\rho_S=\delta_X+1$;
\item [(ii)] if $\delta_X=3$, then $X$ is K-polystable if and only if it is one of the following: 
\begin{itemize}
\item[{\tiny \textbullet}] $X\cong \mathbb{P}^2\times F$;
\item[{\tiny \textbullet}] $X\cong \mathbb{P}^1\times\mathbb{P}^1 \times F$;
\item[{\tiny \textbullet}] $X\cong F\times F$;
\item[{\tiny \textbullet}] $X$, the blow-up of $\mathbb{P}^1\times \mathbb{P}_{\mathbb{P}^1\times\mathbb{P}^1}(\mathcal{O}\oplus \mathcal{O}(1,-1))$ along two surfaces isomorphic to $\mathbb{P}^1\times \mathbb{P}^1$;
\item[{\tiny \textbullet}] $X\cong \mathbb{P}^1\times Y$, where $Y$ is the blow-up of $\mathbb{P}^3$ along a disjoint union of a line and a conic, and along two non-trivial fibers of the exceptional divisor over the blown-up line. 
\end{itemize}
\end{itemize}
\end{thm}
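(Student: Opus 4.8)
The plan is to reduce everything possible to the product criterion (Lemma \ref{lem:product}) combined with the known K-polystability of del Pezzo surfaces (Corollary \ref{cor_delPezzo}) and of Fano 3-folds (\cite{Araujo23}), and to attack the genuinely non-split families by the valuative criterion of \cite{Fujita19, Li17}.

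For part (i), by \cite[Theorem 3.3]{Casa12} the hypothesis $\delta_X \geq 4$ forces $X \cong S_1 \times S_2$ with $S_1, S_2$ del Pezzo surfaces, so by Lemma \ref{lem:product} the variety $X$ is K-polystable if and only if both factors are. By Corollary \ref{cor_delPezzo} the only smooth del Pezzo surfaces that fail to be K-polystable are $\mathbb{F}_1$ and $F'$. To match this with the stated numerical condition I would compute the Lefschetz defect of a product directly: testing on divisors of the form $C \times S_j$ (with $C$ a curve) gives $\delta_X = \max(\rho_{S_1}, \rho_{S_2}) - 1$, so $\delta_X \geq 4$ means the larger Picard number is at least $5$. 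Since $\rho_{\mathbb{F}_1} = 2$ and $\rho_{F'} = 3$, whenever one factor is $\mathbb{F}_1$ or $F'$ the complementary factor $S$ realizes the maximum, whence $\rho_S = \delta_X + 1$; this yields exactly the characterization in (i).

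For part (ii) I would start from the explicit list of the 19 families in \cite[Proposition 1.5]{CRS} and sort them. The split families are of two types: products of two del Pezzo surfaces $S \times F$ (one factor must have Picard number $4$, hence equals the degree-6 surface $F$), and products $\mathbb{P}^1 \times Y$ with $Y$ a Fano 3-fold. For the first type Lemma \ref{lem:product} leaves only $\mathbb{P}^2 \times F$, $(\mathbb{P}^1 \times \mathbb{P}^1) \times F$ and $F \times F$ as K-polystable, since the remaining partners $\mathbb{F}_1$ and $F'$ are excluded; for the second type $\mathbb{P}^1 \times Y$ is K-polystable iff $Y$ is, which I would read off from the Fano 3-fold database \cite{Araujo23}, singling out the stated blow-up of $\mathbb{P}^3$. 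For every remaining non-split family that is claimed K-unstable I would exhibit a prime divisor $E$ over $X$ with $\beta(E) \leq 0$: the natural candidates are the exceptional divisors of the defining blow-ups, the sections of the projective-bundle structures, or the toric boundary divisors, and in each case the projective-bundle/fibration geometry makes the volumes entering $\beta$ explicitly computable.

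The main obstacle is proving K-polystability for the one essentially non-split family claimed to be so, namely the blow-up $X$ of $\mathbb{P}^1 \times \mathbb{P}_{\mathbb{P}^1 \times \mathbb{P}^1}(\mathcal{O} \oplus \mathcal{O}(1,-1))$ along two copies of $\mathbb{P}^1 \times \mathbb{P}^1$. Verifying $\beta > 0$ on every divisor is intractable directly, so I would compute $\Aut(X)^\circ$, check that it is reductive, and invoke equivariant K-stability (\cite{Zhuang20}) to restrict the test to $\Aut(X)^\circ$-invariant valuations; as $X$ is toric or almost toric this turns the problem into a finite, explicit computation of the volume functions appearing in $\beta$. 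The genuinely delicate point will be upgrading K-semistability to K-polystability, that is, ruling out non-trivial product-type special degenerations, which I expect to settle by analysing the induced torus action and showing that every invariant special test configuration is a product.
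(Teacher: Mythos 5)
Your treatment of part (i) and of all the product families in part (ii) matches the paper's argument (Remark~\ref{rem:delta4}, Corollary~\ref{cor_delPezzo}, Lemma~\ref{lem:product}, and \cite[\S 5.23]{Araujo23} for the 3-fold factor $Y$; your computation $\delta_{S_1\times S_2}=\max(\rho_{S_1},\rho_{S_2})-1$ is essentially \cite[Example 3.1]{Casa12}). But there is a genuine gap exactly where you locate ``the main obstacle'': the blow-up of $\mathbb{P}^1\times \mathbb{P}_{\mathbb{P}^1\times\mathbb{P}^1}(\mathcal{O}\oplus\mathcal{O}(1,-1))$ along two copies of $\mathbb{P}^1\times\mathbb{P}^1$ is \emph{toric} (it is $U_8$ in Batyrev's classification \cite{Baty}), as are $14$ of the $19$ families --- precisely those arising via Construction A (Remark~\ref{rem:toric}). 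The paper settles every non-product toric family, stable and unstable alike, by the criterion of \cite[Corollary 1.2]{Berman2016} (Lemma~\ref{prop:toric}): a toric Fano is K-polystable if and only if the barycenter of its reflexive polytope is $0$, a finite computation carried out via \cite{GRD}. This criterion delivers for free the step you flag as ``genuinely delicate'' --- upgrading semistability to polystability by ruling out non-trivial special degenerations --- whereas your proposed route (compute $\Aut(X)^\circ$, invoke equivariant K-stability, then classify all invariant special test configurations by hand) is left entirely unexecuted and is by far the hardest part of your plan; your hedge ``toric or almost toric'' shows the key structural observation is missing. In addition, your destabilizing certificate is stated with the wrong inequality: Theorem~\ref{thm_beta} refutes K-semistability (hence K-polystability) only when $\beta(E)<0$ strictly; $\beta(E)=0$ is inconclusive, since K-polystable Fanos with positive-dimensional automorphism group always carry divisors with vanishing $\beta$, so exhibiting $\beta(E)\leq 0$ does not suffice.

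For the four non-toric, non-product families your plan is in the same spirit as the paper's (Fujita--Li on an explicit divisor), but it omits the actual mathematical content. The paper does not test a generic ``exceptional divisor of the defining blow-ups'': it isolates the one divisor $\widetilde D$ (the strict transform of $D\cong\mathbb{P}^1\times T$), distinguished as the unique $\sigma$-exceptional divisor fixed by the $\mathbb{Z}_3$-symmetry of Remark~\ref{rem:symmetry}, and proves a closed formula for $\beta(\widetilde D)$ valid uniformly in $T$ and $N$ (Proposition~\ref{prop:beta}). The serious work, which ``the projective-bundle geometry makes the volumes explicitly computable'' glosses over, is computing $\vol(-K_X-t\widetilde D)$ \emph{past} the nef threshold: one must produce the Zariski decomposition on $[1,2]$ (Proposition~\ref{prop:ZD}), which rests on identifying $-K_X-\widetilde D$ as a supporting divisor of a specific divisorial face of $\NE(\sigma)$ (Lemma~\ref{lem:supp_div}) using the relative cone description of \cite[\S 6.3]{CRS}, and then integrating $H(t)^4$ with $H(t)=\sigma^*(-K_T+N)+(2-t)(\widetilde D+\widetilde H_0+E_2+E_3)$. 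Without this decomposition the integral defining $S(\widetilde D)$ cannot be evaluated, so as written your argument stalls both at the unstable non-toric families and, more fundamentally, at the K-polystability of $U_8$.
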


Finally, we study K-polystability of Casagrande-Druel Fano 4-folds with Lefschetz defect $\delta=2$ and show the following result. 
\begin{thm}\label{thm:main2}
Let $X$ be a smooth Casagrande-Druel Fano 4-fold with $\delta_X=2$. Then there exists a prime divisor $G\subset X$ such that $\beta(G) \leq 0$, and equality holds if and only if $X$ is one of the following:
\begin{itemize}
\item [(i)] $X^1_{1,2}$, $X^2_{1,2}$, $X^3_{1,2}$, $X^4_{1,2}$, $X^5_{1,2}$, $X^6_{1,2}$, $X^6_{2,4}$, $X^7_{1,2}$, $X^7_{2,4}$, $X^7_{3,6}$ from \cite{Secci23} or
\item [(ii)] $\#4$-$2$, $\#5$-$2$, $\#6$-$4$, $\#7$-$10$, $\#7$-$17$, $\#7$-$21$, $\#7$-$23$, $\#8$-$16$, $\#8$-$18$, $\#9$-$4$, $\#10$-$5$, $\#11$-$2$, $\#11$-$3$, $\#12$-$2$, $\#13$-$2$, $\#14$-$4$, $\#15$-$3$, $\#16$-$4$, $\#17$-$3$, $\#18$-$4$, $\#19$-$5$, $\#19$-$7$, $\#19$-$9$, $\#20$-$8$, $\#20$-$10$, $\#21$-$14$, $\#21$-$19$, $\#21$-$21$, $\#21$-$35$, $\#21$-$37$, $\#22$-$15$, $\#22$-$17$, $\#23$-$5$ from \cite{Pier25}.
\end{itemize}
In particular, $X$ is K-unstable if is not in (i) or (ii).
Moreover, $\#7$-$10$,  $\#7$-$17$, $\#8$-$16$, $\#21$-$14$ and $\#21$-$21$ are a product $\pr^1\times$(Fano 3-fold) and are K-polystable.
\end{thm}

\noindent \textbf{Outline of the paper.} After giving some preliminaries on K-polystability, on the Lefschetz defect $\delta$ and on the structure of Fano 4-folds having $\delta=3$ in Sections \ref{sec:prelim} and \ref{sec:delta3}, we dedicate Section \ref{sec:proof} to the proof of Theorem \ref{thm:main}: as we have already observed, proving item (ii) will require the most effort. Finally, Sections \ref{sec:delta2} and \ref{sec:proof2} contain the classification of Casagrande-Druel Fano 4-folds and the proof of Theorem \ref{thm:main2}.

\noindent \textbf{Strategy of proof.} To prove Theorem \ref{thm:main} we distinguish between the toric and the non-toric case, proceeding in two different ways. The key point to study the toric case is a well known criterion on K-polystability for toric Fano varieties (see Lemma \ref{lem:toric}). Note that in Theorem \ref{thm:main}(ii) all but the last Fano 4-fold are toric varieties.
The non-toric case, on the other hand, consists of five families and is more challenging to check. Here we use the Fujita-Li's valuative criterion (see Theorem \ref{thm_beta}). Our strategy is to prove Proposition \ref{prop:beta}, which provides an explicit formula for the $\beta$-invariant on a special exceptional divisor, denoted by $\widetilde{D}$, contained in all non-toric Fano 4-folds with $\delta=3$. We introduce and describe $\widetilde{D}$, as well as the geometry of its ambient variety, in $\S$\ref{sec:relB}.
To deduce the formula in Proposition \ref{prop:beta}, we first determine the Zariski decomposition of $-K_X-t\widetilde{D}$ for $t\geq 0$ (Proposition \ref{prop:ZD}) through several technical lemmas, in which we heavily rely on our knowledge of the birational geometry of Fano 4-folds with $\delta=3$.
Finally, we deduce that four out of five families of non-toric Fano 4-folds with $\delta=3$ are not K-polystable, as the $\beta$-invariant on $\widetilde{D}$ turns out to be negative. The remaining case (that is the fifth variety in our list (ii) from Theorem \ref{thm:main}) is isomorphic to a product and it gives the only example of non-toric K-polystable Fano variety with $\delta=3$: we apply Lemma \ref{lem:product} to deduce its K-polystability. We summerize our conclusions in Section \ref{sec:tables}, see Table \ref {table:delta3} and Table \ref{table:high-delta}.

The proof of Theorem \ref{thm:main2} follows the same strategy: we produce an explicit formula for the $\beta$-invariant on a pair of special exceptional divisors, denoted by $G$ and $\hat G$, via the Zariski decomposition (Proposition \ref{prop:ZD-G}) and show that $\beta(\hat G)=-\beta(G)$ (Proposition \ref{prop:beta-G}). We determine the cases when $\beta(G)=\beta(\hat{G})=0$ and note from \cite{Pier25} that five of these families are products of $\mathbb{P}^1$ with a smooth Fano 3-fold; Lemma \ref{lem:product} then implies their K-polystability. The remaining families with vanishing $\beta$-invariant at $G$ and $\hat G$ are non-toric, so Lemma \ref{lem:toric} does not apply.

\medskip

\noindent \textbf{Notations.} We work over the field of complex numbers. Let $X$ be a smooth projective variety. 

{\tiny \textbullet} $\sim$ denotes linear equivalence for divisors. We will often not distinguish between a Cartier divisor $D$ and its corresponding invertible sheaf $\ol_X(D)$.

{\tiny \textbullet} $\mathcal{N}_{1}(X)$ (resp.\ $\mathcal{N}^{1}(X)$) is the $\mathbb{R}$-vector space of one-cycles (resp.\ divisors) with real coefficients, modulo numerical equivalence, and
 $\rho_{X}:=\dim \mathcal{N}_{1}(X)=\dim \mathcal{N}^{1}(X)$ is the Picard number of $X$. Sometimes we denote it simply by $\rho$.

{\tiny \textbullet} The \textit{pseudoeffective cone} is the closure of the cone in $\mathcal{N}^{1}(X)$ generated by the classes of effective divisors on $X$; its interior is the big cone. An $\mathbb{R}$-divisor is called pseudoeffective if its numerical class belongs to the pseudoeffetive cone.

{\tiny \textbullet} We denote by $[C]$ the numerical equivalence class in $\mathcal{N}_{1}(X)$ of a one-cycle $C$ of $X$. 

{\tiny \textbullet} $\operatorname{NE}(X)\subset \mathcal{N}_{1}(X)$ is the convex cone generated by classes of effective curves. 

{\tiny \textbullet } A \textit{contraction} of $X$ is a surjective morphism $\varphi\colon X\to Y$ with connected fibers, where $Y$ is normal and projective.
 
{\tiny \textbullet} The \textit{relative cone} $\text{NE}(\varphi)$ of $\varphi$ is the convex subcone of $\text{NE}(X)$ generated by classes of curves contracted by $\varphi$.

{\tiny \textbullet} We denote by $\delta_X$, or simply by $\delta$, the $\text{Lefschetz defect}$ of $X$. 

\medskip

\noindent\textbf{Acknowledgements.} E.A.R.\ is supported by the MIUR Excellence Department Project awarded to Dipartimento di Matematica, Università di Genova, CUP D33C23001110001. She dedicates this work to her  daughter, Miriam.

S.A.S.\ is grateful to the Università di Genova for the kind hospitality and support provided during part of the preparation of this work. Most of the work was carried out during his position at the Università di Milano, and the second version of the article was written while he was in SISSA.

Both authors are members of GNSAGA, INdAM.

\section{Preliminaries}\label{sec:prelim}
This section collects the preliminaries on K-polystability in $\S$\ref{sub_K_stab}, and on the Zariski decomposition in Mori dream spaces in $\S$\ref{sec:ZD}.

\subsection{Fujita-Li's valuative criterion} \label{sub_K_stab}
In this subsection, we recall the characterization of K-semistability via valuations and gather some preliminary results arising from this perspective. 
A key definition is the invariant $\beta(E)$, the $\beta$-invariant, computed on a divisor $E$ over $X$, that is a divisor on a normal birational model $Y$ over $X$ (see \cite{Fujita19, Li17}). Here we focus on smooth varieties, although the treatment can be extended to $\mathbb{Q}$-Fano varieties.

\begin{definition}\label{def:beta} Let $X$ be a smooth Fano variety and $E$ a prime divisor on a normal birational model $\mu\colon Y\to X$. We define
$$\beta(E):=A(E)- \frac{1}{{(-K_X)^n}} \int_{0}^{\infty} \vol(-\mu^*K_X-tE) dt$$
where $A(E)$ is the log-discrepancy of $X$ along $E$, namely $A(E):=1+\text{ord}_E(K_Y-\mu^*(K_X))$.
\end{definition}
We refer to \cite[$\S$2.2.C]{LazI} for the definition of $\vol(\, \text -\, )$. For simplicity, we set $$S(E):=\frac{1}{{(-K_X)^n}} \int_{0}^{\infty} \vol(-\mu^*K_X-tE) dt$$ and notice that this integral takes values in a closed set $[0, \tau]$, where $\tau=\tau(E)$ is the pseudoeffective threshold of $E$ with respect to $-K_X$, namely
\begin{equation*}
\tau(E):=\text{sup}\{s\in \mathbb{Q}_{>0}| -\mu^*K_X-sE \  \text{is big}\}.
\end{equation*}
Therefore, we have $\beta(E)=A(E)-S(E)$. 

The importance of the $\beta$-invariant mainly arises from the following result that is known as the valuative criterion for K-(semi)stability, and it is due to Fujita and Li \cite{Fujita19, Li17}, to which we also refer for a more general statement.  

\begin{thm} \label{thm_beta} Let $X$ be a smooth Fano variety. Then $X$ is K-semistable if and only if $\beta(E)\geq 0$ for all divisors $E$ over $X$. 
\end{thm}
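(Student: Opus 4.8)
The plan is to reduce the valuative quantity $\beta(E)=A(E)-S(E)$ to the Donaldson--Futaki invariant $\mathrm{DF}(\mathcal{X},\mathcal{L})$ appearing in the original definition of K-semistability via test configurations, and then to argue in two directions. For the implication ``$X$ K-semistable $\Rightarrow\beta(E)\geq 0$'', I would attach to each prime divisor $E$ on a model $\mu\colon Y\to X$ a test configuration of $(X,-K_X)$ by deformation to the normal cone: blow up $X\times\mathbb{A}^1$ along the strict transform of $E\times\{0\}$, equip the total space with the $\mathbb{G}_m$-action scaling the $\mathbb{A}^1$-factor, and linearize with a suitable twist of $-K_X$. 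The content is then an intersection-theoretic computation showing that, after normalizing by $(-K_X)^n$, the Donaldson--Futaki invariant of this test configuration equals $\beta(E)$: the integral $\int_0^\infty\vol(-\mu^*K_X-tE)\,dt$ arises as the leading term of the weight expansion, while $A(E)$ appears as the discrepancy correction coming from the blow-up. Since K-semistability forces $\mathrm{DF}\geq 0$ on every normal test configuration, this yields $\beta(E)\geq 0$.

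The harder direction is ``$\beta(E)\geq 0$ for all $E\Rightarrow X$ K-semistable''. Here I would pass from test configurations to filtrations of the section ring $R=\bigoplus_{m\geq 0}H^0(X,-mK_X)$: a normal, ample test configuration determines a $\mathbb{Z}$-filtration $\mathcal{F}$ of $R$, and, following the non-Archimedean formalism, the normalized Donaldson--Futaki invariant rewrites as a functional of the shape $A_{\mathrm{NA}}-S_{\mathrm{NA}}$, computed through an Okounkov-body / volume integral of $\mathcal{F}$. The key step is to bound this functional below by the divisorial quantities $\beta(E)$: the jumping behaviour of the filtration is governed by divisorial valuations, so the infimum of the normalized $\mathrm{DF}$-invariant over all test configurations is controlled by, and in fact approximated by, the infimum of $\beta(E)$ over divisors $E$ over $X$. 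Granting $\beta(E)\geq 0$ for every such $E$, the functional is nonnegative on all filtrations, hence $\mathrm{DF}\geq 0$ on all test configurations, giving K-semistability.

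The main obstacle is precisely this converse: establishing that it suffices to test positivity of $\beta$ on \emph{divisorial} valuations rather than on arbitrary test configurations. This rests on the approximation of general filtrations by divisorial ones, together with the continuity and concavity of the volume function $t\mapsto\vol(-\mu^*K_X-tE)$ entering $S(E)$, for which the Okounkov-body description of volumes and its Legendre-transform relation to the weight of the filtration are the essential technical inputs. By comparison, the first direction is a comparatively mechanical computation on the deformation to the normal cone, so I would expect the bulk of the work to lie in the density/approximation and lower-semicontinuity arguments underpinning the second.
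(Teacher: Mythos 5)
The paper does not actually prove Theorem \ref{thm_beta}: it is the Fujita--Li valuative criterion, quoted from \cite{Fujita19, Li17}, and the authors explicitly defer to those papers (even for the more general $\mathbb{Q}$-Fano statement). So there is no internal proof to compare against; your proposal has to be measured against the original arguments, whose broad architecture --- test configurations versus filtrations of $R=\bigoplus_m H^0(X,-mK_X)$, and the identification of the Donaldson--Futaki invariant with an $A-S$ type functional --- you do reproduce correctly.

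Measured against those arguments, two points need repair. First, in the forward direction, deformation to the normal cone of $E\times\{0\}$ inside $X\times\mathbb{A}^1$ only makes literal sense when $E$ is a prime divisor on $X$ itself; for $E$ on a model $\mu\colon Y\to X$ (the case the theorem is really about) one must instead run the Rees construction on the filtration $\mathcal{F}^\lambda H^0(X,-mK_X)=\{s:\operatorname{ord}_E(s)\geq\lambda\}$, and since this filtration need not be finitely generated ($E$ need not be dreamy) one passes to finitely generated truncations and a limit; even then $\mathrm{DF}$ recovers $\beta(E)$ only in the limit and after the normalization/base change removing the trivial part of the action, not on the nose for a single blow-up test configuration as you assert. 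Second, and more seriously, the converse direction in the actual proofs does \emph{not} proceed by approximating arbitrary filtrations by divisorial ones --- that density formalism (Okounkov-body limits of filtrations, the stability threshold as an infimum over filtrations) postdates the theorem and partly rests on it, so invoking it here is close to circular. What Fujita does instead is bound $\mathrm{DF}$ of an arbitrary normal ample test configuration from below by an explicit positive combination of $\beta(v_i)$, where the $v_i$ are the divisorial valuations on $X$ obtained by restricting $\operatorname{ord}_{X_{0,i}}$ for the irreducible components $X_{0,i}$ of the central fiber, via an intersection computation on a common resolution of $X\times\mathbb{P}^1$ and the test configuration; alternatively, Li--Xu's MMP argument degenerates any test configuration to a special one with integral klt central fiber, for which $\mathrm{DF}$ equals, up to a positive constant, $\beta$ of a single divisorial valuation. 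Your sketch names the right obstacle but fills it with an assertion equivalent to the theorem itself; supplying one of these two mechanisms is precisely the missing content.
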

For our purposes, we will use that if $X$ is not K-semistable, then it is not K-polystable by definition.
Using the valuative criterion, it is easy to deduce that among del Pezzo surfaces, $\mathbb{F}_1$ and the blow-up of $\mathbb{P}^2$ at two points are not K-polystable (see for instance \cite[Lemma 2.3, Lemma 2.4]{Araujo23}). More precisely, we have the following:
\begin{corollary} \cite{Tian90} \label{cor_delPezzo} Let $S$ be a del Pezzo surface. Then $S$ is K-polystable if and only if $S$ is neither isomorphic to  $\mathbb{F}_1$ nor to the blow-up of $\mathbb{P}^2$ at two points.
\end{corollary}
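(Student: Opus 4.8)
The statement is a classical theorem of Tian \cite{Tian90}, so the plan is mainly to organize the classification and invoke the two tools available, the valuative criterion of Theorem \ref{thm_beta} and the toric criterion of Proposition \ref{prop:toric}. I would index del Pezzo surfaces by their anticanonical degree $d=K_S^2\in\{1,\dots,9\}$, so that the two excluded surfaces are $\mathbb{F}_1$ ($d=8$) and $\Bl_{2}\mathbb{P}^2$ ($d=7$), and I would treat the two implications separately.

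For the \virg{only if} direction it suffices, by Theorem \ref{thm_beta}, to produce a single divisor $E$ with $\beta(E)<0$ on each exception, since failure of K-semistability forces failure of K-polystability. In both cases I would take $E$ to be a $(-1)$-curve on the surface itself, so that $\mu=\Id$ and $A(E)=1$, and compute $S(E)$ by integrating $\vol(-K_S-tE)$ up to the pseudoeffective threshold $\tau(E)$. For $\mathbb{F}_1$, writing $-K_{\mathbb{F}_1}=2C_0+3f$ with $C_0$ the $(-1)$-section and $f$ a fiber, the class $-K_{\mathbb{F}_1}-tC_0$ stays nef all the way to $\tau=2$, so $\vol$ is just its self-intersection and the integral is elementary, giving $S(C_0)=\tfrac{7}{6}$ and $\beta(C_0)=-\tfrac{1}{6}<0$. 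For $\Bl_{2}\mathbb{P}^2$ the same scheme applies to an exceptional $(-1)$-curve $E$, but now $-K-tE$ stays nef only for small $t$ and then acquires a negative part supported on the strict transform of the line through the two centers; computing the Zariski decomposition on the two resulting subintervals yields $\beta(E)=-\tfrac{2}{21}<0$. These are the computations of \cite[Lemma 2.3, Lemma 2.4]{Araujo23}.

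For the \virg{if} direction I would split the surfaces into toric and non-toric. All del Pezzo surfaces of degree $d\ge 6$ are toric, namely $\mathbb{P}^2$, $\mathbb{P}^1\times\mathbb{P}^1$ and $\Bl_{3}\mathbb{P}^2$ together with the two excluded ones, and for these K-polystability is read off from the barycentric condition of Proposition \ref{prop:toric}: the symmetric reflexive polytopes of $\mathbb{P}^2$, $\mathbb{P}^1\times\mathbb{P}^1$ and $\Bl_{3}\mathbb{P}^2$ have barycenter at the origin, so these are K-polystable, while the asymmetric polytopes of $\mathbb{F}_1$ and $\Bl_{2}\mathbb{P}^2$ do not, recovering the previous paragraph. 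The surfaces of degree $d\le 5$ are non-toric, and here I would invoke Tian's theorem \cite{Tian90}: its proof produces a K\"ahler--Einstein metric by bounding the $\alpha$-invariant from below, directly when $\alpha(S)>\tfrac{2}{3}$ and through Tian's refined criterion in the borderline cases $\alpha(S)=\tfrac{2}{3}$, and K-polystability then follows.

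The main obstacle is precisely this last family. Ruling out the two exceptions needs only one well-chosen valuation, whereas certifying K-polystability in low degree demands the bound $\beta(E)\ge 0$ for \emph{every} divisor $E$ over $S$; the elementary $\alpha$-invariant estimate is not always strict, and controlling the borderline configurations is the genuine analytic content of Tian's original work, which I would cite rather than reprove (a purely valuative reproof is possible but lengthy).
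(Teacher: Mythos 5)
Your proposal is correct and takes essentially the same route as the paper, which states the result as a citation to \cite{Tian90} and points to \cite[Lemma 2.3, Lemma 2.4]{Araujo23} for ruling out $\mathbb{F}_1$ and the blow-up of $\mathbb{P}^2$ at two points via Theorem \ref{thm_beta}; your explicit values $\beta(C_0)=-\tfrac{1}{6}$ and $\beta(E)=-\tfrac{2}{21}$ (with the Zariski negative part supported on the transform of the line through the two centers) both check out. Your only addition, handling the toric degree-$\geq 6$ cases by the barycenter criterion of Lemma \ref{prop:toric} before deferring the non-toric degree-$\leq 5$ cases to Tian's analytic argument, is a harmless refinement of the same strategy.
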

Many varieties that we are going to study are products, and so we recall  the following result. We refer to \cite[Theorem 1.1]{Zhuang20} for a more general statement involving the other notions of K-stability. 
\begin{lemma} \cite[Theorem 1.1]{Zhuang20} \label{lem:product} Let $X_1$, $X_2$ be Fano varieties and let $X=X_1\times X_2$. Then $X$ is K-polystable if and only if $X_i$ is K-polystable for $i=1,2$. 
\end{lemma}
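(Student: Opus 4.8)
The statement is quoted from \cite[Theorem 1.1]{Zhuang20}; here I outline the strategy I would follow to establish it. Throughout write $n_i=\dim X_i$ and $n=n_1+n_2$, and recall the Künneth-type identities $-K_X\sim p_1^*(-K_{X_1})+p_2^*(-K_{X_2})$ and $(-K_X)^n=\binom{n}{n_1}(-K_{X_1})^{n_1}(-K_{X_2})^{n_2}$, where $p_i\colon X\to X_i$ are the projections (only the mixed term of top bidegree survives, for dimension reasons). The plan is to treat the two implications separately, reducing everything to the valuative/test-configuration picture underlying Theorem \ref{thm_beta}.

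\emph{The easy direction} ($X$ K-polystable $\Rightarrow$ each $X_i$ K-polystable) I would prove by contraposition. Any special test configuration $\mathcal X_i$ of a factor $X_i$ induces, by taking the trivial product with the other factor, a test configuration $\mathcal X_i\times X_{3-i}$ of $X$ whose generalized Futaki invariant equals that of $\mathcal X_i$ rescaled by the positive constant $(-K_{X_{3-i}})^{n_{3-i}}$; this follows from the product formulas above together with the additivity of the relevant non-Archimedean energies along a trivial product. Hence a destabilizing configuration on $X_i$ (negative Futaki invariant) lifts to a destabilizing configuration on $X$, and a non-product special configuration with vanishing Futaki invariant on $X_i$ lifts to one on $X$ whose central fibre $(\mathcal X_i)_0\times X_{3-i}$ is still not isomorphic to $X$; either way $X$ fails to be K-polystable.

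\emph{The hard direction} (each $X_i$ K-polystable $\Rightarrow X$ K-polystable) is the heart of the matter. First, by Matsushima's theorem each $\Aut^0(X_i)$ is reductive, so $T:=T_1\times T_2\subseteq \Aut^0(X)$ is a torus, where $T_i$ is a maximal torus of $\Aut^0(X_i)$. The key reduction I would invoke is equivariant K-stability (in the spirit of Datar--Sz\'ekelyhidi): to verify K-polystability of $X$ it suffices to test $T$-equivariant special test configurations. The crucial structural fact is then that a $T$-equivariant special test configuration of $X_1\times X_2$ splits as a product of $T_i$-equivariant configurations of the factors; equivalently, on the section ring $R(X,-K_X)=\bigoplus_{m\ge 0}H^0(X_1,-mK_{X_1})\otimes H^0(X_2,-mK_{X_2})$ the induced filtration is the tensor product of the filtrations on the factors, so that its Duistermaat--Heckman measure is the convolution of the two factor measures. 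Convolution turns the relevant non-Archimedean functionals into sums, yielding an additivity $\beta_X=c_1\,\beta_{X_1}+c_2\,\beta_{X_2}$ with positive weights $c_i$. Since each $X_i$ is K-polystable, each summand is nonnegative, whence $\beta_X\geq 0$ (K-semistability), and $\beta_X=0$ forces $\beta_{X_i}=0$ for both $i$, so each factor configuration is a product and therefore so is the configuration of $X$. This is exactly K-polystability of $X$.

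The main obstacle is the splitting in the hard direction: a priori a divisorial valuation (or test configuration) over $X_1\times X_2$ need \emph{not} be a product of valuations on the factors, so one cannot naively apply Fubini to the volume integral defining $S(E)$. It is precisely the passage to $T$-equivariant objects, combined with the convolution formula for Duistermaat--Heckman measures, that forces the product structure and produces the additivity; making this rigorous---and correctly identifying the equality case $\beta_X=0$ with product configurations---is the technical core, and is the content of \cite{Zhuang20}.
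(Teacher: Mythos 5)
The paper itself offers no proof of Lemma \ref{lem:product}: the statement is imported verbatim from \cite[Theorem 1.1]{Zhuang20} and used as a black box, so the comparison must be with Zhuang's argument, to which you also ultimately defer. Your easy direction is essentially correct: a special test configuration of a factor lifts to the product with Futaki invariant rescaled by the positive constant $(-K_{X_{3-i}})^{n_{3-i}}$ (in the $\beta$-normalization this follows from the K\"unneth formulas you state, since $\vol\bigl(p_1^*(-K_{X_1}-tE)+p_2^*(-K_{X_2})\bigr)$ factors). One small repair: to see that the lifted configuration is not a \emph{product} configuration you should argue via $\Aut^0(X_1\times X_2)=\Aut^0(X_1)\times\Aut^0(X_2)$ (K\"unneth for $H^0(X,T_X)$), rather than via non-isomorphism of central fibres, which would implicitly require a cancellation statement for (possibly singular) Fano products.

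The genuine gap is in the hard direction: your ``crucial structural fact'' --- that a $T$-equivariant special test configuration of $X_1\times X_2$ splits as a product of configurations of the factors --- is false in general, and with it the claimed additivity $\beta_X=c_1\beta_{X_1}+c_2\beta_{X_2}$ collapses. The torus $T=T_1\times T_2$ can be trivial (take both factors to be quintic del Pezzo surfaces, which are K-polystable with finite automorphism groups), in which case ``$T$-equivariant'' means ``arbitrary,'' and a product certainly admits non-product special degenerations and non-product divisorial valuations: blow-ups of the diagonal or of a point, or, for $\pr^1\times\pr^1$, the degeneration to the quadric cone $\mathbb{P}(1,1,2)$, whose central fibre is not a product of curves. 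The convolution formula for Duistermaat--Heckman measures holds only for filtrations that are already of tensor-product type, so it cannot restore the Fubini argument for a general valuation over $X_1\times X_2$. This is exactly the obstacle you correctly identify in your last paragraph, but the mechanism you propose to overcome it (equivariance forcing a splitting) does not work outside special situations such as the toric case, where the big torus makes every equivariant special configuration a product configuration. Zhuang's actual proof confronts arbitrary valuations and filtrations on the product directly, through estimates on basis-type divisors and the stability threshold for the semistability statement, with equivariant reduction entering only in the analysis of the equality case for polystability; this is why the theorem is substantially harder than your sketch suggests, and your proposal as written would not close it.
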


\begin{remark} \label{rem:beta}
Although the computation of the $\beta$-invariant involves the volume of divisors that are not necessarily nef (in fact, we use the Zariski decomposition; see §\ref{sec:ZD}), it may still be possible to compute it explicitly for divisors whose behavior on birational models of the ambient variety is well understood, thanks to the powerful tools of birational geometry. This is the strategy we will employ in the proof of Theorem \ref{thm:main} (see Proposition \ref{prop:beta} and the proof of Proposition \ref{prop:non-toric}), as well as in the proof of Theorem \ref{thm:main2} (see Proposition \ref{prop:beta-G}).
\end{remark}

\subsection{Zariski decomposition in Mori dream spaces} \label{sec:ZD} A common approach to compute the $\beta$-invariant of an effective divisor on a Fano variety, and hence its volume, is to determine its Zariski decomposition. In our case, we note that smooth Fano varieties are Mori dream spaces (MDS) by \cite{BCHM}. In fact, the existence of such a well-behaved decomposition characterizes Mori dream spaces, and on such varieties the Zariski decomposition is unique, as observed in \cite[Remark 2.12]{OKA2016}. To make our exposition self-contained, we begin with the following basic definition, see \cite[$\S$2]{OKA2016} for details. 

\begin{definition} Let $X$ be a normal projective variety and $D$ a pseudoeffective $\mathbb{Q}$-Cartier $\mathbb{Q}$-divisor on $X$. A Zariski decomposition of $D$ is given by a pair of $\mathbb{Q}$-Cartier $\mathbb{Q}$-divisors $P$ and $N$ on $X$ which satisfy the following properties:
\begin{itemize}\renewcommand\labelitemi{\tiny \textbullet}
\item $P$ is nef;
\item $N$ is effective;
\item $D$ is $\mathbb{Q}$-linearly equivalent to $P+N$;
\item for any sufficiently divisible $m\in \mathbb{Z}_{>0}$ the multiplication map $$H^0(X, \mathcal{O}(mP))\to H^0(X, \mathcal{O}(mD))$$ given by the tautological section of $\mathcal{O}(mN)$ is an isomorphism.
\end{itemize}
\end{definition}
If $X$ is a MDS, by \cite[Proposition 1.11(2)]{HK00} we know that there exist finitely many birational contractions $g_i\colon X\dashrightarrow  X_i$ and that the pseudoeffective cone of $X$ is given by the union of finitely many Mori chambers $\mathcal{C}_i$. Each chamber is of the form
$$\mathcal{C}_i={g_i}^* \Nef{(X_i)+\mathbb{R}_{\geq 0}}\{E_1, \dots, E_k\}$$
with $E_1, \dots, E_k$ prime divisors contracted by $g_i$, and $\Nef{(X_i)}$ the nef cone of $X_i$.

We may now interpret such a result as an instance of Zariski decomposition, as done in \cite[Proposition 2.13]{OKA2016}. Indeed, for every pseudoeffective $\mathbb{Q}$-Cartier $\mathbb{Q}$-divisor $D$ on a MDS $X$, there exists a rational birational contraction $g\colon X\dashrightarrow Y$ (factorizing through an SQM and a birational contraction $X\xdasharrow{$\psi$} X^{\prime}\xrightarrow{g'} Y$) and $\mathbb{Q}$-Cartier $\mathbb{Q}$-divisors $P$ and $N$ on $X$ such that $D $ is $\mathbb{Q}$-linearly equivalent to $P+N$, $P':=\psi_*P$ is nef on $X^{\prime}$ and defines $g'\colon X'\to Y$, $N':=\psi_*N$ is $g'$-exceptional, and the multiplication map
$$H^0(X', mP')\to H^0(X', m(\psi_*D))$$
is an isomorphism for $m\gg0$. Namely, $P'$ and $N'$ give a Zariski decomposition of $\psi_*D$ as a divisor in $X^{\prime}$.  To see this, we simply set $P:=g^*g_*D$ and $N:=D-P$.

\section{Fano manifolds with Lefschetz defect 3}  \label{sec:delta3}
In this section we recap the classification and construction of smooth complex Fano varieties with Lefschetz defect $\delta=3$.

The Lefschetz defect $\delta_X$ of a smooth Fano variety $X$ is an invariant that depends on the Picard number of its prime divisors, and it was first introduced in \cite{Casa12}. We recall its definiton below, see also \cite{Casa2023} for a recent survey on this new invariant and its properties.
\begin{definition}\label{def:Lefschetz}
Let $X$ be a complex smooth Fano variety, and $D$ be a prime divisor on $X$. Consider the pushforward $\iota_*\colon\N(D)\to\N(X)$ induced by the inclusion and set $\N(D,X):=\iota_*(\N(D))$. The Lefschetz defect of $X$ is
$$\delta_X:=\max\bigl\{\codim\N(D,X)\,|\,D\text{ a prime divisor in }X\bigr\}.$$
\end{definition}

\begin{remark} \label{rem:delta4} Smooth Fano varieties with large Lefschetz defect have been completely described in arbitrary dimension: indeed, $X$ has a rigid geometry when $\delta_X \geq 4$, that is $X$ is the product of Fano varieties of lower dimension (cf. \cite[Theorem 3.3]{Casa12}).
In particular, if $X$ is a Fano 4-fold having $\delta_X \geq 4$, then $X\cong S_1\times S_2$ with $S_i$ del Pezzo surfaces, and applying \cite[Example 3.1]{Casa12} we may assume that $\rho_{S_1}=\delta_X+1$. Then, by Corollary \ref{cor_delPezzo} and Lemma \ref{lem:product} we conclude that $X$ is K-polystable if and only if $S_i$ is neither isomorphic to $\mathbb{F}_1$ nor to the blow-up of $\mathbb{P}^2$ at two points.
\end{remark}

Thus, we consider the next case, i.e.\ Fano $4$-folds with $\delta=3$. The strategy to prove Theorem \ref{thm:main} is to compute the $\beta$-invariant on a particular divisor that these varieties carry out. We see that this invariant turns out to be negative in many examples, so that we understand when $K$-polystability fails thanks to Theorem \ref{thm_beta}. Although not necessarely a product, Fano varieties with $\delta=3$ still have a very explicit description, indeed by \cite[Theorem 1.4]{CRS} they are obtained via two possible constructions that we are going to recall below (cf. \cite[$\S$3, $\S$4]{CRS}). 

Let $X$ be a smooth Fano variety with $\delta_X=3$. Then, there exist a smooth Fano variety $T$ with $\dim T=\dim X-2$ and a $\pr^2$-bundle $\ph\colon Z\to T$, such that $X$ is obtained by blowing-up $Z$ along three pairwise disjoint smooth, irreducible, codimension 2 subvarieties $S_1,S_2,S_3$; we will denote by $h\colon X\to Z$ the blow-up map and set $\sigma:=h \circ \varphi\colon X\to T$. The $\pr^2$-bundle $\ph\colon Z\to T$ is the projectivization of a suitable decomposable vector bundle on $T$, and $S_2$ and $S_3$ are sections of $\ph$. Instead, $\ph_{|S_1}\colon S_1\to T$ is finite of degree $1$ or $2$: this yields two distinct constructions depending on the degree of $S_1$ over $T$, whenever the degree is 1 we refer to it as Construction A, otherwise we get Construction B.

As a consequence, in \cite{CR} and \cite{CRS} we get the complete classification in the case of dimension $4$ and $\delta=3$, as follows. In Theorem \ref{thm:main} we are going to analyze the $K$-polystability for all of these families. 

\begin{thm}[\cite{CR}, Theorem 1.1; \cite{CRS}, Proposition 1.5]
\label{classification} Let $X$ be a Fano $4$-fold with $\delta_X=3$. Then $5\leq \rho_X \leq 8$ and there are $19$ families for $X$, among which $14$ are toric. 
\begin{itemize}\renewcommand\labelitemi{\tiny \textbullet}
\item If $\rho_X=8$, then $X\cong F\times F$,  where $F$ is the blow-up of $\mathbb{P}^2$ at $3$ non-collinear points;
\item if $\rho_X=7$, then $X\cong F'\times F$, where $F'$ is the blow-up of $\mathbb{P}^2$ at $2$ points;
\item if $\rho_X=6$, there are $11$ families for $X$, among which $8$ are toric;
\item if $\rho_X=5$, there are $6$ families for $X$, among which $4$ are toric.
\end{itemize}
\end{thm}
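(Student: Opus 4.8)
The statement repackages \cite[Theorem 1.1]{CR} and \cite[Proposition 1.5]{CRS}, and the plan is to derive it from the structural description recalled just above. By \cite[Theorem 1.4]{CRS}, every smooth Fano $4$-fold $X$ with $\delta_X=3$ is the blow-up $h\colon X\to Z$ of a $\mathbb{P}^2$-bundle $\ph\colon Z\to T$ along three pairwise disjoint smooth irreducible codimension-$2$ centers $S_1,S_2,S_3$, where $\dim T=\dim X-2=2$, so that $T$ is a smooth del Pezzo surface. Since $Z=\mathbb{P}(\mathcal{E})$ for a rank-$3$ bundle $\mathcal{E}$ on $T$ we have $\rho_Z=\rho_T+1$, and blowing up three disjoint smooth codimension-$2$ centers raises the Picard number by one for each, so $\rho_X=\rho_Z+3=\rho_T+4$. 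The whole problem thus reduces to determining which triples $(T,\mathcal{E},\{S_i\})$ produce a Fano $4$-fold, organised by the value $\rho_T=\rho_X-4$.

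The lower bound $\rho_T\geq 1$ is automatic, while the upper bound $\rho_T\leq 4$ (equivalently $\rho_X\leq 8$) is the first thing I would establish: although a del Pezzo surface may a priori have $\rho_T$ up to $9$, one shows that keeping $-K_X$ ample after blowing up three centers upstairs forces $T$ to be one of $\mathbb{P}^2$, $\mathbb{P}^1\times\mathbb{P}^1$, $\mathbb{F}_1$, $F'$, $F$, i.e.\ $\rho_T\leq 4$, giving $5\leq\rho_X\leq 8$. The two extreme cases I would then settle by a rigidity argument. For $\rho_T=4$ one has $T\cong F$ and for $\rho_T=3$ one has $T\cong F'$, these being the only del Pezzo surfaces of the respective Picard numbers; in each case the Fano constraints force $\mathcal{E}$ to be trivial (up to twist) and the centers $S_1,S_2,S_3$ to be constant sections $T\times\{p_i\}$ with $p_1,p_2,p_3\in\mathbb{P}^2$ non-collinear, so that $X\cong T\times F$. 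This yields the unique families $X\cong F\times F$ for $\rho_X=8$ and $X\cong F'\times F$ for $\rho_X=7$.

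It then remains to treat $\rho_T=2$ (so $T\cong\mathbb{P}^1\times\mathbb{P}^1$ or $T\cong\mathbb{F}_1$) and $\rho_T=1$ (so $T\cong\mathbb{P}^2$), and this explicit enumeration is where I expect the main effort to lie. For each such $T$ one must list the decomposable rank-$3$ bundles $\mathcal{E}$ and the admissible configurations of three pairwise disjoint centers --- with $S_2,S_3$ sections of $\ph$ and $S_1$ finite of degree $1$ (Construction A) or $2$ (Construction B) over $T$ --- and then decide, configuration by configuration, whether the resulting blow-up is Fano. The Fano test is the technical core: in each case one computes $-K_X$ and checks ampleness by intersecting against a generating set of the Mori cone $\NE(X)$, using the explicit birational geometry supplied by $\ph$, $h$ and the exceptional divisors. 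Carrying this out is expected to produce $11$ families for $\rho_X=6$ and $6$ families for $\rho_X=5$; toricity is finally read off from whether $(T,\mathcal{E},\{S_i\})$ is torus-equivariant, and since the two products for $\rho_X=7,8$ are toric, together with $8$ of the $11$ families for $\rho_X=6$ and $4$ of the $6$ for $\rho_X=5$, one obtains $14$ toric families out of the total $19$.
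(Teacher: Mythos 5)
The paper does not actually prove this statement internally: Theorem \ref{classification} is imported from \cite[Theorem 1.1]{CR} and \cite[Proposition 1.5]{CRS}, so your plan must be measured against the strategy of those papers. Its skeleton — reduce via the structure theorem \cite[Theorem 1.4]{CRS} to triples $(T,\mathcal{E},\{S_i\})$ with $T$ a del Pezzo surface, observe $\rho_X=\rho_Z+3=\rho_T+4$, and classify by the value of $\rho_T$ — is indeed the correct architecture and matches the cited proofs in outline.

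There are, however, two genuine gaps. First, your proposed mechanism for the upper bound fails: ampleness of $-K_X$ does \emph{not} force $\rho_T\leq 4$. For \emph{any} del Pezzo surface $S$, the product $S\times F$ is Fano and arises from Construction A with the trivial bundle $Z=S\times\mathbb{P}^2$ and three constant sections over non-collinear points; yet when $\rho_S\geq 5$ one has $\delta_{S\times F}=\rho_S-1\geq 4$ (cf.\ \cite[Example 3.1]{Casa12} and Remark \ref{rem:delta4}). So \virg{keeping $-K_X$ ample} cannot rule out $\rho_T\geq 5$; what does is the hypothesis $\delta_X=3$ itself. Since the structure theorem is only an implication ($\delta_X=3$ yields Construction A or B, not conversely), a correct proof must compute the Lefschetz defect of the candidate outputs, discarding those with $\delta\geq 4$ and confirming $\delta=3$ for the families retained — a step absent from your plan. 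Second, the decisive content of the theorem is deferred rather than proved: the claims that the enumeration over $\rho_T\in\{1,2\}$ yields exactly $6$ and $11$ families, the rigidity assertions at $\rho_T\in\{3,4\}$ (that $\mathcal{E}$ must trivialize and the centers become constant sections), and the toricity counts are all announced as \virg{expected} outcomes. These case analyses are precisely the bulk of \cite{CR} and \cite[$\S$7]{CRS}; without carrying them out (or citing them, as the paper does), the proposal is a plausible roadmap consistent with the literature, not a proof of the classification.
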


\begin{remark}\label{rem:toric} In view of \cite[Remark 6.1]{CRS}, the toric families of Theorem \ref{classification} are exaclty those arising via Construction A. More precisely, they correspond to the products $F\times F$ and $F'\times F$ if $\rho \geq 7$ and, following Batyrev's classification of smooth toric Fano 4-folds and its notation (see \cite{Baty}), to the toric varieties of type $U$ (eight possible families) if $\rho=6$, and to the toric varieties of type $K$ (four possible families) if $\rho=5$. For most of these cases we will use a characterization result on K-polystability for toric varieties (see $\S$\ref{sec:toric_proof}). Thus, the most effort will be required by the Fano 4-folds obtained via Construction B, that is the non-toric families. The two non-toric families with $\rho=5$ have been studied in \cite[Examples 5.1 and 5.2]{CR}, while the remaining three families with $\rho=6$ are described in \cite[$\S$7]{CRS}. 
\end{remark}

\subsection{Construction B: relative cone and relative contractions} \label{sec:relB} Construction B is described in \cite[$\S$4]{CRS}, we summarize it in the following.
We have
$$\varphi\colon Z\cong \pr_T(\mathcal{O}(N)\oplus\mathcal{O} \oplus\mathcal{O}) \to T,$$
where $N$ is a divisor on $T$ such that $h^0(T,2N)>0$ and $-K_T\pm N$ is ample. We denote by $H$ a tautological divisor of $Z$. Let $D:=\mathbb{P}(\mathcal{O} \oplus\mathcal{O})\hookrightarrow Z$ be the divisor given by the projection $\mathcal{O}(N)\oplus\mathcal{O} \oplus\mathcal{O} \to \mathcal{O} \oplus\mathcal{O}$, so that $D\cong \mathbb{P}^1\times T$ and $D\sim H -\varphi^*N$. Let now $S_2$, $S_3\subset D$, $S_i\cong \{pt\}\times T\subset D$, be the sections corresponding to the projections $\mathcal{O} \oplus\mathcal{O}\to \mathcal{O}$, while $\varphi_{\mid S_1}\colon S_1\to T$ is a double cover ramified along $\Delta \in |2N|$ (see \cite[Remarks 4.1, 4.3]{CRS}). There exists a unique smooth divisor $H_0\in |H|$ containing $S_1$ such that $H_0\cong \mathbb{P}_T(\mathcal{O}(N)\oplus \mathcal{O})$, $H_{\mid H_0}$ is a tautological divisor, and $S_1$ is linearly equivalent to $2H_{\mid H_0}$. Moreover, the surfaces $\{S_1, S_2, S_3\}$ are pairwise disjoint and fiber-wise in general position.

Let $h\colon X\to Z$ be the blow-up along $\{S_1, S_2, S_3\}$, and set $\sigma:=h \circ \varphi\colon X\to T$. We denote by $E_i$ the exceptional divisors over $S_i$, $i=1,2,3$, and by $\widetilde H_0$ and $\widetilde D$ the strict transforms of $H_0$ and $D$ in $X$. 
 
We now recall the description of the relative cone $\NE(\sigma)$ and its elementary contractions, which are all divisorial. The corresponding exceptional divisors will be our key to study the $K$-polystability of the varieties obtained via Construction B. We refer to \cite[$\S$6.3]{CRS} for details. 

Let $t\in T\setminus \Delta$, so that $X_t:=\sigma^{-1}(t)$ is a smooth del Pezzo surface of degree $5$ and a smooth $\sigma$-fiber. Denote by $\{p_1,p_1', p_2, p_3\} \in Z_{t}:=\varphi^{-1}(t)$ the  points blown-up by $h_{|X_{t}}\colon X_{t}\to Z_{t}$, where $p_i= S_i \cap Z_{t}$ for $i=2,3$, and $\{p_1, p_1'\}=S_1 \cap Z_{t}$. The $5$-dimensional cone  $\NE(X_{t})$ is generated by the classes of the ten $(-1)$-curves in $X_{t}$, given by the exceptional curves and the transforms of the lines through two blown-up points. 
We denote by
$e_i$ (respectively $e_1'$) the exceptional curve over $p_i$ (respectively $p_1'$), and $\ell_{i,j}$
 (respectively $\ell_{1,1'}$, $\ell_{1',i}$ for $i=2,3$)
the transform of the line $\overline{p_ip_j}$
(respectively $\overline{p_1p_1'}$, $\overline{p_1'p_i}$ for $i=2,3$). Let $\iota\colon X_{t}\hookrightarrow 
X$ be the inclusion; by \cite[Lemma 6.4]{CRS} one has that every relative elementary contraction of $X/T$ restricts to a non-trivial contraction of $X_{t}$, and
$\iota_*\NE(X_{t})=\NE(\sigma)$.

Figure \ref{figuraconoB} shows the $3$-dimensional polytope obtained as a hyperplane section of the $4$-dimensional cone $\NE(\sigma)$, which has $7$ extremal rays, and their generators.
By \cite[Thm.~1.2]{Wisn91I} we deduce that every relative elementary contraction of $\NE(\sigma)$ is the blow-up of a smooth variety along a smooth codimension 2 subvariety. The contraction corresponding to  $[e_1]=[e_1']$ (resp.\ $[e_2], [e_3]$) is the blow-down of $E_1$ (resp.\ $E_2, E_3$), while the contractions corresponding to $[\ell_{1,1'}]$ and $[\ell_{2,3}]$ have respectively exceptional divisors  $\widetilde H_0$ and $\widetilde D$.
Moreover, we denote by $G_i$ the exceptional divisor of the contraction corresponding to $[\ell_{1,i}]=[\ell_{1',i}]$ for $i=2,3$; by construction, $G_i$ has a $\pr^1$-bundle structure over $S_1$ whose fibers are numerically equivalent to $\ell_{1,i}$ and $\ell_{1',i}$ for $i=2,3$.

Lastly, we observe that $E_1 \cong G_2 \cong G_3$ and that $E_2\cong E_3\cong \w H_0$.
\begin{figure} \label{fig:figure1}
\includegraphics[width=0.4\columnwidth]{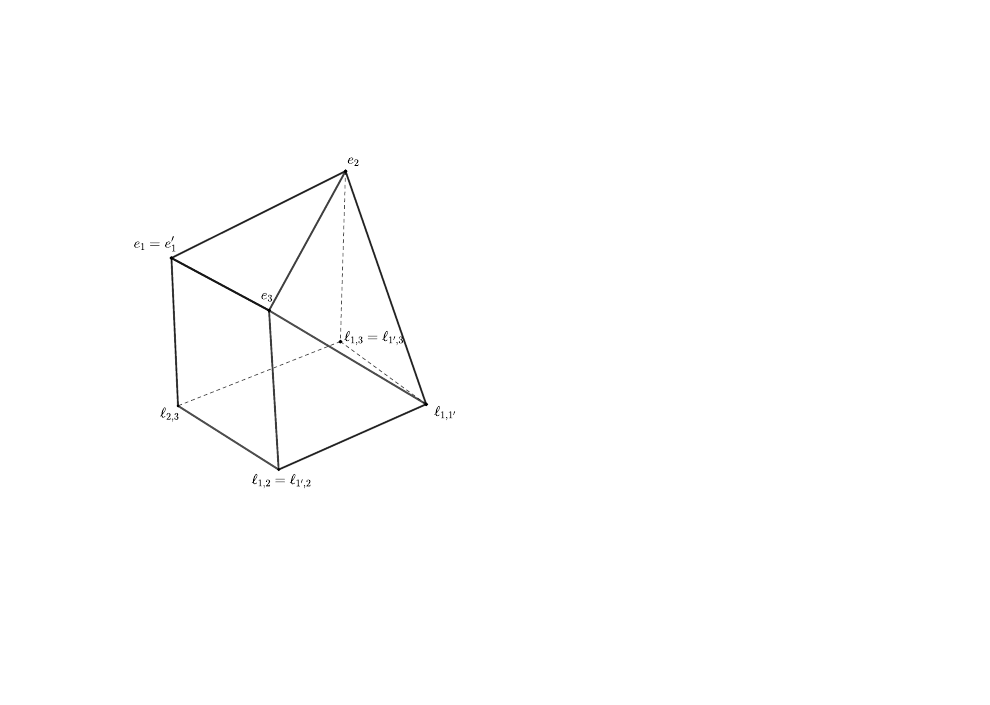}
\caption{A section of $\NE(\sigma)$}\label{figuraconoB}
\end{figure}
\subsection{Construction B: relations among exceptional divisors} \label{sec:symmetry} In this section we refer to \cite[$\S$3.8.2]{TesiSecci}. 

\begin{remark}\label{rem:symmetry}
By \cite[Proposition 6.6]{CRS}, we know that $\sigma \colon X \to T$ has three factorizations of the form $X \xrightarrow{h} Z \xrightarrow{\varphi} T$, where $h \colon X \to Z$ is the divisorial contraction of $\{E_1, E_2, E_3\}$, $\{G_2, E_3, \w H_0\}$ or $\{G_3, E_2, \w H_0\}$, and $Z\xrightarrow{\varphi}T$ is isomorphic to the $\pr^2$-bundle from $\S$\ref{sec:relB}. In fact, there is a $\mathbb Z_3$-action on the set of $\sigma$-exceptional divisors
$$\{E_1, E_2, E_3, \w H_0, \w D, G_2, G_3\}$$
induced by an automorphism of a general $\sigma$-fiber $X_t$ (see \cite[$\S$8.5.4]{Dolg12} for the description of $\Aut(X_t)$), which in turn it extends to an automorphism of $X$ over $T$. This action corresponds to the permutation $(1,2,3)$ on the triplets $(E_1, G_2, G_3)$ and $(E_2, E_3, \w H_0)$, while $\w D$ remains invariant.

The symmetry on the $\sigma$-exceptional divisors given by the three factorizations of $\sigma$ allows us, for instance, to deduce computations on $E_3$ and $\w H_0$ from computations on $E_2$. This will be a key tool for the computation in $\S$\ref{sec:non_toric}. Moreover, the unique behaviour of $\w D$ among all $\sigma$-exceptional divisors led us to the computation of its $\beta$-invariant.
\end{remark}

\medskip

Recall that $H_0 -\varphi^*N\sim D$, $S_1 \subset H_0$ and $S_2, S_3 \subset D$, so that the pull-back $h^*$ and the above Remark yield the following relations among the $\sigma$-exceptional divisors:
\begin{gather*}
\w H_0 +E_1 -\sigma^*N \sim \w D +E_2+E_3, \qquad
E_2 +G_2 -\sigma^*N \sim \w D +\w H_0+E_3, \\
E_3 +G_3 -\sigma^*N \sim \w D +\w H_0+E_2.
\end{gather*}
Moreover, $E_2$, $E_3$, $\w H_0$ and $\w D$ are $\pr^1$-bundles over $T$, while $E_1$, $G_2$ and $G_3$ are $\pr^1$-bundles over $S_1$. We have that:
\begin{itemize}
\item[(i)] $\w H_0 \cong \pr_T(-K_T \oplus -K_T-N)$ and ${-K_X}_{|\w H_0}$ is the tautological divisor; the same holds for $E_2$ and $E_3$.
\item[(ii)] $\w D\cong \pr_T(-K_T-N \oplus -K_T-N)$ and ${-K_X}_{|\w D}$ is the tautological divisor.
\end{itemize}
Note that $E_2$, $E_3$ and $\w H_0$ are pairwise disjoint, and that their intersection with $\w D$ is a section $\{pt\}\times T$ of $\w D$. As a divisor in $\w D$, this intersections correspond to surjections $\ol(-K_T-N) \oplus \ol(-K_T-N) \to \ol(-K_T-N)$, while they correspond to the projection $\ol(-K_T) \oplus \ol(-K_T-N) \to \ol(-K_T-N)$, as a divisor in $E_2$, $E_3$ and $\w H_0$.

\medskip

Finally, we can write $-K_X$ as
\begin{equation}\label{equation:anticanonical}
-K_X \sim \sigma^*(-K_T+N)+\w{H}_0+2\widetilde D+E_2+E_3.
\end{equation}

\section{Proof of Theorem \ref{thm:main}} \label{sec:proof}
In this section we show Theorem \ref{thm:main}. We keep the notation introduced in the previous section. The case $\delta\geq 4$ has been explained in Remark \ref{rem:delta4}, thus from now on we consider the case $\delta=3$. 

\subsection{Toric case} \label{sec:toric_proof}
We recall from Theorem \ref{classification} that there are 14 families of toric Fano 4-folds with $\delta=3$, and from Remark \ref{rem:toric} that all of them arise via Construction A. The aim of this section is to deduce which ones among them are K-polystable. Our conclusion will be the following:
\begin{proposition} \label{prop:toric_case} Let $X$ be a toric Fano 4-fold with $\delta_X=3$. Then it is $K$-polystable if and only if it is one of the following varieties:
\begin{itemize}\renewcommand\labelitemi{\tiny \textbullet}
\item $X\cong \mathbb{P}^2\times F$, where $F$ is the blow-up of $\mathbb{P}^2$ along three non-collinear points;
\item $X\cong \mathbb{P}^1\times\mathbb{P}^1 \times F$;
\item $X$ is the blow-up of $\mathbb{P}^1\times \mathbb{P}_{\mathbb{P}^1\times\mathbb{P}^1}(\mathcal{O}\oplus \mathcal{O}(1,-1))$ along two surfaces isomorphic to $\mathbb{P}^1\times \mathbb{P}^1$.
\item $X\cong F\times F$.
\end{itemize}
\end{proposition}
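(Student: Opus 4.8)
The statement is a classification of K-polystable toric Fano 4-folds with $\delta=3$ among the 14 toric families. Since these are all toric, the natural approach is to invoke the combinatorial criterion for K-polystability of toric Fano varieties (the referenced Proposition \ref{prop:toric}), which states — in its standard form — that a $\mathbb{Q}$-Fano toric variety is K-polystable if and only if the barycenter of its anticanonical (moment) polytope is the origin. So the proof reduces to a finite, case-by-case computation: for each of the 14 families, write down the polytope $P_X$ (equivalently the fan, read off from Batyrev's classification via Remark \ref{rem:toric}, where they are identified as products $F\times F$, $F'\times F$ and the types $U$ and $K$), and check whether its barycenter vanishes.

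**Key steps in order.** First I would organize the 14 families using the structure from Remark \ref{rem:toric}: the two products $F\times F$ (type $\rho=8$) and $F'\times F$ ($\rho=7$), the eight type-$U$ families ($\rho=6$), and the four type-$K$ families ($\rho=5$). For the two products I would simply apply Lemma \ref{lem:product} together with Corollary \ref{cor_delPezzo}: since $F$ is K-polystable but $F'$ (the blow-up of $\mathbb{P}^2$ at two points) is not, $F\times F$ is K-polystable while $F'\times F$ is not. This already disposes of the $\rho\geq 7$ cases and confirms that $F\times F$ is on the list while no $\rho=7$ variety is. For the twelve type-$U$ and type-$K$ families, I would extract each fan from Batyrev's tables, form the dual polytope of $-K_X$, and compute its barycenter. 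The varieties $\mathbb{P}^2\times F$, $\mathbb{P}^1\times\mathbb{P}^1\times F$, and the blow-up of $\mathbb{P}^1\times\mathbb{P}_{\mathbb{P}^1\times\mathbb{P}^1}(\mathcal{O}\oplus\mathcal{O}(1,-1))$ should yield barycenter zero (for the two products one can again shortcut via Lemma \ref{lem:product}, noting $\mathbb{P}^2$, $\mathbb{P}^1\times\mathbb{P}^1$ and $F$ are all K-polystable), while the remaining nine families should yield a nonzero barycenter.

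**Main obstacle.** The conceptual content is light — the hard part is bookkeeping. The chief obstacle is correctly matching each of the twelve non-product toric families to its fan in Batyrev's classification and carrying out the barycenter computation without error; a subtle point is that the anticanonical polytope must be presented in a lattice basis in which the torus-invariance (and hence the relevant symmetry of the fan) is transparent, since it is the asymmetry of the polytope that forces a nonzero barycenter in the nine non-K-polystable cases. Where possible I would lighten this by exploiting product structure via Lemma \ref{lem:product}, reserving the explicit barycenter computation for the genuinely non-split type-$U$ and type-$K$ fans. Once each barycenter is computed, the criterion immediately sorts the families into the two classes, yielding exactly the four K-polystable varieties listed.
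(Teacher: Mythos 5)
Your proposal is correct and takes essentially the same route as the paper: products of surfaces are handled via Corollary \ref{cor_delPezzo} and Lemma \ref{lem:product}, and the remaining non-product toric families are sorted by the barycenter criterion of Lemma \ref{prop:toric} applied case-by-case through Batyrev's classification. The only (immaterial) difference is that the paper outsources the barycenter computations to the Graded Ring Database rather than computing them by hand from the fans.
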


In order to prove the above result, we recall that Gorenstein toric Fano varieties correspond to reflexive lattice polytopes, that is those for which the dual 
is also a lattice polytope. We will make use of the following characterization of K-polystability for toric Fano varieties. 
 
\begin{lemma} \label{lem:toric} \cite[Corollary 1.2]{Berman2016}
Let $X_P$ be a toric Fano variety associated to a reflexive polytope $P$. Then, $X_P$ is $K$-polystable if
and only if $0$ is the barycenter of $P$.
\end{lemma}
In the following proof we follow Batyrev's notation  \cite{Baty} for the toric Fano 4-folds of Theorem \ref{classification} with $\rho=5,6$: type $K$ for varieties of Theorem \ref{classification} having $\rho=5$, and type $U$ for the ones with $\rho=6$. 
\begin{proof}[Proof of Proposition \ref{prop:toric_case}.] Assume that $X$ is a product of surfaces. If $\rho_X=5$, then $X=K_4\cong \mathbb{P}^2\times F$ is K-polystable by Corollary \ref{cor_delPezzo} and Lemma \ref{lem:product}. If $\rho_X=6$, then either $X=U_4\cong \mathbb{F}_1\times F$ or $X=U_5\cong \mathbb{P}^1\times\mathbb{P}^1 \times F$, and applying the same results we deduce that among them only $U_5$ is K-polystable. For the same reason, and by Theorem \ref{classification}, we deduce that $X$ is not K-polystable if $\rho_X=7$, while it is K-polystable if $\rho_X=8$, namely if $X\cong F\times F$.	

Assume now that $X$ is not a product of surfaces. In view of Lemma \ref{lem:toric} we are left to check whether $0$ corresponds to the barycenter of the polytopes corresponding to the remaining varieties of our classification. To this end, we use the Graded ring database (see \cite{GRD}), giving the invariants of these varieties (computed in \cite{Baty}) as inputs. It turns out that among them, the only K-polystable variety is $U_8$, that is the blow-up of $\mathbb{P}^1\times \mathbb{P}_{\mathbb{P}^1\times\mathbb{P}^1}(\mathcal{O}\oplus \mathcal{O}(1,-1))$ along two surfaces isomorphic to $\mathbb{P}^1\times \mathbb{P}^1$.
\end{proof}

\subsection{Non-toric case} \label{sec:non_toric}
We now prove prove that among the five possible families of non-toric Fano 4-folds having $\delta=3$ (see Theorem \ref{classification} and Remark \ref{rem:toric}), only one is $K$-polystable. More precisely, after our discussion we will deduce the following:

\begin{proposition} \label{prop:non-toric} Let $X$ be a non-toric Fano 4-fold with $\delta_X=3$. Then $X$ is $K$-polystable if and only if $X\cong \mathbb{P}^1\times Y$, where $Y$ is the blow-up of $\mathbb{P}^3$ along a disjoint union of a line and a conic, and along two non-trivial fibers of the exceptional divisor over the blown-up line. Otherwise, $X$ is $K$-unstable.
\end{proposition}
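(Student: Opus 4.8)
The plan is to split the five non-toric families into two groups and treat them by entirely different mechanisms, exactly as the outline promises. First I would isolate the single product family $X\cong\mathbb{P}^1\times Y$: here K-polystability reduces, via Lemma \ref{lem:product}, to checking that each factor is K-polystable. The factor $\mathbb{P}^1$ is obviously K-polystable (it is $\mathbb{P}^1$, or one may view it as a del Pezzo that is not $\mathbb{F}_1$ nor $\Bl_2\mathbb{P}^2$), so the whole problem collapses to showing that the threefold $Y$ — the blow-up of $\mathbb{P}^3$ along a disjoint line and conic and then along two fibers of the exceptional divisor over the line — is K-polystable. I would identify $Y$ in the Araujo et al.\ classification of K-(poly)stable Fano 3-folds \cite{Araujo23} by matching its numerical invariants ($\rho_Y$, degree, the specific birational description), and cite the corresponding entry to conclude $Y$ is K-polystable; hence $\mathbb{P}^1\times Y$ is K-polystable.

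For the remaining four non-toric families, the goal is the opposite: to prove they are \emph{not} K-polystable, and by Theorem \ref{thm_beta} it suffices to exhibit a single divisor over $X$ with negative $\beta$. The natural candidate is the distinguished divisor $\widetilde D$ from $\S$\ref{sec:relB}, which every Construction B variety carries and which is left invariant by the $\mathbb{Z}_3$-symmetry of Remark \ref{rem:symmetry}. The strategy is to invoke Proposition \ref{prop:beta}: using the anticanonical expression \eqref{equation:anticanonical} and the Zariski decomposition of $-K_X-t\widetilde D$ furnished by Proposition \ref{prop:ZD}, one obtains an explicit closed formula for $S(\widetilde D)$ as an integral of the volume $\vol(-K_X-t\widetilde D)$ over $[0,\tau]$, and hence for $\beta(\widetilde D)=A(\widetilde D)-S(\widetilde D)=1-S(\widetilde D)$ (since $\widetilde D$ is a strict transform, not exceptional for $\sigma$, its log-discrepancy is $1$). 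I would then specialize the base $T$ (equivalently the numerical data $-K_T$, $N$, and the intersection numbers on $T$) to each of the four families and evaluate $\beta(\widetilde D)$ numerically, checking in each case that it comes out strictly negative.

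The heart of the matter — and the step I expect to be the main obstacle — is computing $\vol(-K_X-t\widetilde D)$ as $t$ ranges over $[0,\tau]$, because $-K_X-t\widetilde D$ is not nef and its Zariski chamber structure changes as $t$ increases. This is where the detailed birational geometry of Construction B is indispensable: one must track how the nef/pseudoeffective decomposition of $-K_X-t\widetilde D$ crosses the walls of the Mori chamber decomposition, using the seven extremal contractions of $\NE(\sigma)$ (Figure \ref{figuraconoB}) and the linear relations among $E_1,E_2,E_3,\widetilde H_0,\widetilde D,G_2,G_3$. Concretely I would determine the successive contractions that the divisors $E_i$, $G_i$, $\widetilde H_0$ undergo as their coefficients in the negative part $N(t)$ switch on, integrate the resulting piecewise-polynomial volume function against $dt$, and assemble $S(\widetilde D)$. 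Once Proposition \ref{prop:ZD} pins down this decomposition, the volume is a top self-intersection of an explicit nef $\mathbb{Q}$-divisor and the remaining work is a finite, if laborious, intersection-theory computation on $X$ pushed down to $T$.

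Assembling the two halves, I conclude that the four Construction B families with $\beta(\widetilde D)<0$ fail K-semistability, hence K-polystability, while $X\cong\mathbb{P}^1\times Y$ is K-polystable; this is precisely the statement of Proposition \ref{prop:non-toric}.
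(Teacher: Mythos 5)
Your proposal follows the paper's proof essentially verbatim: the product case is dispatched by Lemma \ref{lem:product} together with the K-polystability of $Y$ taken from \cite[\S 5.23]{Araujo23}, and the four non-product families are excluded by evaluating $\beta(\widetilde D)$ through the Zariski decomposition of Proposition \ref{prop:ZD} and the closed formula of Proposition \ref{prop:beta}, specializing $(T,N)$ case by case and checking negativity --- exactly the paper's route. One small correction: $A(\widetilde D)=1$ holds simply because $\widetilde D$ is a prime divisor on $X$ itself; your justification (``not exceptional for $\sigma$'') is misstated, since $\widetilde D$ \emph{is} $\sigma$-exceptional, being contracted by the elementary contraction associated to $[\ell_{2,3}]$.
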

Note that $Y$ above is $\ma N^{\circ} 5.1$ in the notation of \cite{Araujo23}.
We recall that all non-toric Fano 4-folds of Theorem \ref{classification} arise from Construction B, see Remark \ref{rem:toric}. In particular, $X$ from Proposition \ref{prop:non-toric} is obtained via this construction taking $T\cong \mathbb{P}^1\times \mathbb{P}^1$ and $N\in |\mathcal{O}_{\mathbb{P}^1\times \mathbb{P}^1}(0,1)|$. 
In order to prove Proposition \ref{prop:non-toric}, the first objective is to compute the $\beta$-invariant of $\widetilde{D}$ (see $\S$\ref{sec:relB}), and to this end we will show the following result. 

\begin{proposition} \label{prop:beta}
Let $X$ be a non-toric Fano 4-fold with $\delta_X=3$. Set $a=-K_X^4$; $b=N^2$; $c=(-K_T-N)^2$; $d=N \cdot (-K_T-N)$; $e=(-K_T+N)^2$; $f=N\cdot (-K_T+N)$. Then: $$\beta(\widetilde{D})=\frac{1}{a}\bigl(\frac{2}{5}b+8c+6d-4e+4f\bigr).$$
\end{proposition}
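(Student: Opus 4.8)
The plan is to compute $\beta(\widetilde D)=A(\widetilde D)-S(\widetilde D)$ directly from the definition, using the Zariski decomposition of $-K_X-t\widetilde D$ to evaluate the volume integral. First I would record the log-discrepancy: since $\widetilde D$ is already a prime divisor on $X$ (not extracted by a nontrivial blow-up of $X$), we have $A(\widetilde D)=1$, so the entire content lies in computing $S(\widetilde D)=\frac{1}{a}\int_0^\infty \vol(-K_X-t\widetilde D)\,dt$, where $a=-K_X^4=(-K_X)^4$. The reason the answer is a clean polynomial in the six intersection numbers $b,c,d,e,f$ is that along the way every volume will be expressed as a top self-intersection of a nef divisor on $X$ (or a suitable birational model), and the geometry summarized in $\S\ref{sec:relB}$ and $\S\ref{sec:symmetry}$ pins those classes down.

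The core computation is the Zariski decomposition of $-K_X-t\widetilde D$ as a function of $t\ge 0$, which I expect to be supplied by Proposition \ref{prop:ZD}. The strategy is to split the integration range $[0,\tau]$ (where $\tau=\tau(\widetilde D)$ is the pseudoeffective threshold) into finitely many subintervals on which the positive part $P(t)$ varies linearly in $t$; on each subinterval the Mori chamber containing the class $-K_X-t\widetilde D$ is constant, so $P(t)$ is a fixed nef divisor minus a $t$-dependent multiple of the contracted exceptional divisors. Using the anticanonical expression \eqref{equation:anticanonical}, namely $-K_X\sim \sigma^*(-K_T+N)+\widetilde H_0+2\widetilde D+E_2+E_3$, together with the linear relations among the $\sigma$-exceptional divisors and the identifications in (i)--(ii) (that $\widetilde D\cong \pr_T(-K_T-N\oplus -K_T-N)$ with $-K_X|_{\widetilde D}$ tautological, and analogously for $E_2,E_3,\widetilde H_0$), I would write $P(t)^4$ on each piece as a polynomial in $t$ with coefficients in the intersection numbers on $T$. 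Integrating $\vol(-K_X-t\widetilde D)=P(t)^4$ piecewise and summing yields $S(\widetilde D)$, and hence the stated formula after multiplying by $1/a$.

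The main obstacle is the bookkeeping of the Zariski chambers and the correct intersection-theoretic evaluation of $P(t)^4$ across walls. Concretely, as $t$ increases the divisor $-K_X-t\widetilde D$ will cross finitely many walls of the pseudoeffective cone; at each wall one of the relative contractions of $X/T$ (the blow-downs of $E_1,E_2,E_3$, or the contractions with exceptional divisors $\widetilde H_0,\widetilde D$, or the $G_i$) becomes relevant, and the negative part $N(t)$ acquires a new boundary component. Keeping track of \emph{which} extremal rays are contracted on each subinterval, and computing the self-intersection of the positive part by pushing forward to the appropriate birational model where it is nef, is where errors are easy to make; the $\mathbb Z_3$-symmetry of Remark \ref{rem:symmetry} permuting $(E_2,E_3,\widetilde H_0)$ should reduce the casework, since the three play symmetric roles with respect to $\widetilde D$. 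Once $P(t)$ is known explicitly as a nef class on each chamber, the computation of $P(t)^4$ reduces to intersection numbers that restrict to $T$ via $\sigma$, and collecting the six combinations $b=N^2$, $c=(-K_T-N)^2$, $d=N\cdot(-K_T-N)$, $e=(-K_T+N)^2$, $f=N\cdot(-K_T+N)$ gives the closed form; verifying the numerical coefficients $\tfrac{2}{5},8,6,-4,4$ is then a matter of careful algebra rather than geometry.
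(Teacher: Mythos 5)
Your proposal follows essentially the same route as the paper's proof: $A(\widetilde D)=1$ because $\widetilde D$ is a prime divisor on $X$, the Zariski decomposition of $-K_X-t\widetilde D$ supplied by Proposition \ref{prop:ZD} (in fact there is a single wall at $t=1$, where the negative part $(t-1)(\widetilde H_0+E_2+E_3)$ appears all at once by the $\mathbb Z_3$-symmetry of Remark \ref{rem:symmetry}, and the threshold is $\tau(\widetilde D)=2$), followed by piecewise integration of $P(t)^4$ using the $\pr^1$-bundle intersection formulas of Remark \ref{rem:computations}, exactly as carried out in Lemmas \ref{lem:firstpart} and \ref{lem:secondpart}. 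The one complication you anticipate but do not actually need is the pushforward to a birational model: the contraction supported by $-K_X-\widetilde D$ is divisorial rather than small (Lemma \ref{lem:supp_div}), so the positive part $H(t)$ is nef on $X$ itself and all intersection numbers are computed directly on $X$.
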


We start with some preliminary computations that follow from $\S$\ref{sec:relB} and \cite[Appendix A]{Hart}; we will use these to prove the lemmas below. 

\begin{remark}\label{rem:computations}
Recall from Remark \ref{rem:symmetry} that there is a symmetry among the exceptional divisors $\{E_2, E_3, \w H_0\}$. Denote by $\eta$ a tautological divisor of $\mathbb{P}_T(N\oplus N)$ and by $\xi$ a tautological divisor of $\mathbb{P}_T(N\oplus \ol)$. Then, 
\begin{itemize}\renewcommand\labelitemi{\tiny \textbullet}
\item $\widetilde{D}_{\mid \widetilde{D}}=-\eta$, so that ${\w D}^3\sim \eta\cdot\sigma_{|\w D}^*(2N)- \sigma_{|\w D}^*(N)^2$ and ${\w D}^4=-3N^2$;
\item ${\tilde{H_0}}_{|\w H_0}=-\xi$, so that $(\w H_0)^3\sim \xi\cdot\sigma_{|E_2}^*(N)$ and $(\w H_0)^4=-N^2$; the same holds for $E_2$ and $E_3$;
\item ${-K_X}_{|\w D} \sim \eta + \sigma_{|\w D}^*(-K_T-2N)$;
\item $({-K_X}_{|\w D})^2\sim{-K_X}_{|\w D}\cdot \sigma_{|\w D}^*(-2K_T-2N) - \sigma_{|\w D}^*(-K_T-N)^2$;
\item ${-K_X}_{|\w H_0}\sim \xi+\sigma_{|\widetilde H_0}^*(-K_T-N)$; the same holds for $E_2$ and $E_3$;
\item $(\w H_0)^2\cdot \w D\sim 0$ and $(\w H_0)^3\cdot \w D=0$; the same holds for $E_2$ and $E_3$;
\item $(\sigma^*M)^i\sim0$ for all $M\in \Pic(T)$ and $i=3,4$.
\end{itemize}
Note that in all the examples of Fano 4-folds with $\delta=3$ obtained via Construction B, the divisor $N$ is nef (see \cite[$\S 7$]{CRS}), therefore $\eta$ and $\xi$ are nef as well.
\end{remark}

We first describe the Zariski decomposition of the divisor $-K_X-t\widetilde{D}$ when $t\geq 0$.
\begin{proposition}\label{prop:ZD}
Let $X$ be a non-toric  Fano 4-fold with $\delta_X=3$. The Zariski decomposition of the divisor $-K_X-t\widetilde{D}$ is given by
\begin{center}
$$
P(t)=
\begin{cases}
-K_X-t\widetilde{D}, & t\in \left[0,1\right]\\
H(t), & t\in \left(1,2\right]
\end{cases}
$$
\end{center}
where $H(t)=[\sigma^*(-K_T+N)+(2-t)\widetilde{D}]+(2-t)(\widetilde{H}_0+E_2+E_3)$, and the pseudoeffective threshold of $\w D$ with respect to $-K_X$ is $\tau(\w D)=2$. 
\end{proposition}

The proof of Proposition \ref{prop:ZD} is a consequence of the lemmas below and $\S$\ref{sec:ZD}.

\begin{lemma} \label{lemma:step1} The restriction of $-K_X-t\widetilde{D}$ to $\w H_0$, $E_2$ and $E_3$ is nef for $0\leq t\leq 1$, while $(-K_X-t\widetilde{D})_{\mid \widetilde{D}}$ is nef for $t\geq 0$.
\end{lemma}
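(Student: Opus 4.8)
The plan is to reduce the statement to the nefness of explicit divisor classes on the two $\pr^1$-bundles $\w H_0$ and $\w D$ over $T$, using the restriction formulas collected in Remark \ref{rem:computations}. Since Remark \ref{rem:symmetry} gives a $\mathbb Z_3$-action permuting $(E_2,E_3,\w H_0)$ and fixing $\w D$, induced by an automorphism of $X$ over $T$ that preserves both $-K_X$ and $\w D$, the divisor $-K_X-t\w D$ is invariant and its restrictions to $E_2,E_3,\w H_0$ are carried into one another by isomorphisms. Hence it suffices to treat $\w H_0$, and the cases of $E_2,E_3$ follow (their structural data in Remark \ref{rem:computations} is literally the same).

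First I would compute the two restrictions. For $\w D\cong \pr_T(N\oplus N)$ I use $-K_X|_{\w D}\sim \eta+\sigma^*(-K_T-2N)$ and $\w D|_{\w D}=-\eta$ to get
\[
(-K_X-t\w D)|_{\w D}\sim (1+t)\eta+\sigma^*(-K_T-2N).
\]
For $\w H_0\cong \pr_T(-K_T\oplus(-K_T-N))\cong\pr_T(N\oplus\ol)$ the key input is the class of $\w D|_{\w H_0}$. The intersection $\w D\cap\w H_0$ is the section of $\sigma_{|\w H_0}$ associated to the quotient $\ol(-K_T)\oplus\ol(-K_T-N)\to\ol(-K_T-N)$; after twisting to normalize the bundle to $N\oplus\ol$ this is the quotient onto the trivial summand, so by the Grothendieck section formula (using $\xi^2=\xi\cdot\sigma^*N$) its class is $\xi-\sigma^*N$, i.e.\ the negative section. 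Together with $-K_X|_{\w H_0}\sim \xi+\sigma^*(-K_T-N)$ this yields
\[
(-K_X-t\w D)|_{\w H_0}\sim (1-t)\xi+\sigma^*\bigl(-K_T-(1-t)N\bigr).
\]

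Next I would record the nef criterion on these bundles. On $P=\pr_T(\ol\oplus L)$ with $L$ nef and tautological class $\zeta$ (nef, restricting trivially to the negative section $\Sigma$), the cone $\NE(P)$ is generated by a fibre $f$ and the curves $\Sigma(C)$ with $C$ extremal on $T$; hence $a\zeta+\sigma^*M$ is nef if and only if $a\ge 0$ and $M$ is nef, since $(a\zeta+\sigma^*M)\cdot f=a$ and $(a\zeta+\sigma^*M)\cdot\Sigma(C)=M\cdot C$, while the complementary section imposes nothing new as $L,M$ are nef and $a\ge0$. Applying this to $\w H_0$ (with $L=N$), nefness amounts to $1-t\ge0$ together with nefness of $-K_T-(1-t)N$; for $0\le t\le1$ the latter is a convex combination $t(-K_T)+(1-t)(-K_T-N)$ of the ample classes $-K_T$ and $-K_T-N$ (recall $-K_T\pm N$ is ample in Construction B), hence ample. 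For $\w D$, writing $\eta=\eta_0+\sigma^*N$ with $\eta_0$ the tautological class of $\pr_T(\ol\oplus\ol)\cong\pr^1\times T$ turns the restriction into $(1+t)\eta_0+\sigma^*(-K_T+(t-1)N)$; this is nef for all $t\ge0$ because $1+t\ge0$ and $-K_T+(t-1)N$ is ample, being a convex combination of $-K_T-N$ and $-K_T$ on $[0,1]$ and ample-plus-nef for $t\ge1$ (here nefness of $N$ is used).

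The main obstacle is the bookkeeping of the second paragraph: correctly identifying $\w D|_{\w H_0}$ as the negative section $\xi-\sigma^*N$ (the right summand, the twist, and the sign in the Grothendieck relation), and pinning down the $\pr^1$-bundle nef cones. Once the two displayed restriction formulas are in place, the required positivity is immediate from convexity of the ample cone of $T$ and the Construction B hypotheses that $-K_T\pm N$ is ample and $N$ is nef.
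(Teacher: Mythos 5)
Your proposal is correct and follows essentially the same route as the paper: it arrives at the identical restriction formulas $(1-t)\xi+\sigma^*(-K_T+(t-1)N)$ on $\w H_0$ and $(1+t)(\eta-\sigma^*N)+\sigma^*(-K_T+(t-1)N)$ on $\w D$, and concludes by the same inputs (ampleness of $-K_T+sN$ for $s\in[-1,1]$ by convexity, nefness of $N$ hence of the tautological classes). The only differences are cosmetic: you spell out the identification $\w D_{|\w H_0}=\xi-\sigma^*N$ (which the paper leaves implicit in $\S$\ref{sec:symmetry} and Remark \ref{rem:computations}) and invoke the $\mathbb Z_3$-symmetry of Remark \ref{rem:symmetry} for $E_2, E_3$, where the paper simply notes the restrictions have the same form.
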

\begin{proof}
Recall that by construction, $-K_T\pm N$ is an ample divisor on $T$, so $-K_T+ sN$ is ample for $-1 \leq s \leq 1$. Thus, $(-K_X-t\widetilde{D})_{\mid \widetilde{H}_0}\sim (1-t)\xi+\sigma_{|\widetilde H_0}^*(-K_T+(t-1)N)$ is nef for $t\leq 1$.
Similarly, $(-K_X-t\widetilde{D})_{\mid \widetilde{D}}\sim(1+t)(\eta-\sigma_{\mid \w D}^*N)+\sigma_{\mid \w D}^*(-K_T+(t-1)N)$; the claim follows since $\eta-\sigma_{\mid \w D}^*N$ is the tautological divisor of $\mathbb{P}_T(\mathcal{O}\oplus \mathcal{O})$, $-K_T-N$ is ample, and $N$ is nef.
\end{proof}
\begin{remark} \label{rmk:strictly_pos} Let $\Gamma$ be an irreducible curve not contained in $\widetilde{H}_0\cup E_2 \cup E_3 \cup \widetilde{D}$. If $\Gamma$ is contracted by $\sigma$, then by construction $\w H_0\cdot\Gamma$, $E_2\cdot\Gamma$, $E_3\cdot\Gamma$, $\w D\cdot\Gamma\geq0$ and at least one inequality is strict. 
\end{remark}
\begin{lemma} \label{lem:nef} The divisor $-K_X-t\widetilde{D}$ is nef for $0\leq t\leq 1$.
\end{lemma}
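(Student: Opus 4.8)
The plan is to reduce the whole segment $0\le t\le 1$ to its single endpoint $t=1$, and then to verify nefness there by a case analysis over irreducible curves. For $0\le t\le 1$ one can write
$$-K_X-t\w D=(1-t)(-K_X)+t(-K_X-\w D),$$
exhibiting $-K_X-t\w D$ as a convex combination, with nonnegative coefficients, of $-K_X$ (which is ample, hence nef, since $X$ is Fano) and of $-K_X-\w D$. Thus it suffices to prove that $-K_X-\w D$ is nef: the nefness on the full interval then follows immediately.

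To check that $-K_X-\w D$ is nef I would verify $(-K_X-\w D)\cdot\Gamma\ge 0$ for every irreducible curve $\Gamma\subset X$, splitting into cases. If $\Gamma\subset\w D$, then $(-K_X-\w D)\cdot\Gamma=(-K_X-\w D)_{\mid\w D}\cdot\Gamma\ge 0$ because the restriction to $\w D$ is nef for all $t\ge 0$ by Lemma \ref{lemma:step1}; similarly, if $\Gamma$ lies in $\w H_0$, $E_2$ or $E_3$, the same Lemma gives nefness of the relevant restriction at $t=1$, hence the inequality. This settles all curves contained in $\w H_0\cup E_2\cup E_3\cup\w D$.

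For the remaining curves I would use the rewriting of \eqref{equation:anticanonical}, namely
$$-K_X-\w D\sim\sigma^*(-K_T+N)+\w H_0+\w D+E_2+E_3.$$
If $\Gamma$ is contracted by $\sigma$ and is not contained in $\w H_0\cup E_2\cup E_3\cup\w D$, then the pullback term vanishes against $\Gamma$, while by Remark \ref{rmk:strictly_pos} each of $\w H_0\cdot\Gamma$, $E_2\cdot\Gamma$, $E_3\cdot\Gamma$, $\w D\cdot\Gamma$ is nonnegative with at least one strictly positive, giving $(-K_X-\w D)\cdot\Gamma>0$. If instead $\Gamma$ is not contracted by $\sigma$, then $\sigma_*\Gamma$ is a nonzero effective one-cycle on $T$, so $\sigma^*(-K_T+N)\cdot\Gamma=(-K_T+N)\cdot\sigma_*\Gamma>0$ by ampleness of $-K_T+N$, and the remaining terms are nonnegative since $\Gamma$ is not contained in the effective divisors $\w H_0$, $\w D$, $E_2$, $E_3$. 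Note that curves inside $E_1$, $G_2$ or $G_3$ require no separate treatment: they fall into one of these last two cases depending on whether they are $\sigma$-contracted.

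The main point requiring care is ensuring the case division is genuinely exhaustive, in particular that every $\sigma$-contracted curve either lies in one of the four distinguished divisors, where Lemma \ref{lemma:step1} applies, or is covered by Remark \ref{rmk:strictly_pos}. Once this is arranged, the convexity reduction to $t=1$ and the nonnegativity of intersections of a curve with effective divisors not containing it are routine, the only quantitative input being the ampleness of $-K_T+N$ guaranteed by Construction B.
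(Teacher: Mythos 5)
Your proof is correct and follows essentially the same route as the paper's: the same case split between curves inside $\w H_0\cup E_2\cup E_3\cup\w D$ (handled by Lemma \ref{lemma:step1}) and the rest (handled via equation \eqref{equation:anticanonical}, Remark \ref{rmk:strictly_pos}, and the ampleness of $-K_T+N$). The only cosmetic difference is your convexity reduction to the endpoint $t=1$, whereas the paper runs the identical curve analysis uniformly for all $t\in[0,1]$ using the coefficient $2-t$ on $\w D$ (and additionally records, for later use, that nefness fails for $t>1$).
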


\begin{proof} Assume $t>1$. Let $\ell$ be a fiber of the restriction $\sigma_{\mid \widetilde{H}_0}\colon \widetilde{H}_0\to T$. Since $\ell$ is a fiber of the exceptional divisor of a smoth blow-up (see $\S$2.4), one has $(-K_X-t\widetilde{D})\cdot \ell=1-t<0$. Thus, we may assume $t\leq 1$. If $\Gamma$ is an irreducible curve contained in $\widetilde{H}_0\cup E_2 \cup E_3 \cup \widetilde{D}$, then $(-K_X-t\widetilde{D})\cdot \Gamma \geq 0$ by Lemma \ref{lemma:step1}. Otherwise, using equation \eqref{equation:anticanonical} in $\S$\ref{sec:symmetry}, one has $$(-K_X-t\widetilde{D})\cdot \Gamma=[\sigma^*(-K_T+N)+(2-t)\widetilde{D}+(\widetilde{H}_0+E_2+E_3)]\cdot \Gamma >0,$$ and this follows from Remark \ref{rmk:strictly_pos} and the ampleness of $-K_T+N$. 
\end{proof}
\begin{lemma} \label{lem:supp_div} The divisor $-K_X-\widetilde{D}$ is a supporting divisor of the birational contraction $X\to W$ associated to the facet $\langle [e_2],[e_3],[\ell_{1,1'}] \rangle$ of $\NE{(\sigma)}$. 
\end{lemma}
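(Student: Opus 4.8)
The plan is to verify directly the two defining features of a supporting divisor for the contraction of a facet of the relative cone $\NE(\sigma)$: namely that $-K_X-\w D$ is nef, and that its orthogonal complement meets $\NE(\sigma)$ exactly in the face $\langle [e_2],[e_3],[\ell_{1,1'}]\rangle$. Nefness is immediate by taking $t=1$ in Lemma \ref{lem:nef}. For the orthogonality statement, I would exploit that $\iota_*\NE(X_t)=\NE(\sigma)$ by \cite[Lemma 6.4]{CRS}, so that it suffices to intersect $-K_X-\w D$ against the seven extremal generators of $\NE(\sigma)$, each represented by one of the ten $(-1)$-curves lying in a general fiber $X_t$.

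The key reduction is that all the relevant intersection numbers can be computed on the quintic del Pezzo surface $X_t$. By adjunction ${-K_X}_{|X_t}=-K_{X_t}$, hence $-K_X\cdot C=1$ for every $(-1)$-curve $C\subset X_t$. Moreover, since $\w D$ is the exceptional divisor of the elementary contraction of the ray $[\ell_{2,3}]$, its restriction to $X_t$ is the $(-1)$-curve $\ell_{2,3}$ (again by \cite[Lemma 6.4]{CRS}), so that ${\w D}_{|X_t}=\ell_{2,3}$ and $\w D\cdot C=\ell_{2,3}\cdot C$ computed on $X_t$ for every curve $C\subset X_t$. Realizing $X_t$ as the blow-up of $\pr^2$ at $p_1,p_1',p_2,p_3$ with hyperplane class $H$, so that $\ell_{2,3}=H-e_2-e_3$, the required products reduce to $(-K_X-\w D)\cdot C=1-\ell_{2,3}\cdot C$.

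A short computation in the Picard lattice of $X_t$ then gives $\ell_{2,3}\cdot e_2=\ell_{2,3}\cdot e_3=\ell_{2,3}\cdot \ell_{1,1'}=1$, so that $-K_X-\w D$ vanishes on the three rays $[e_2],[e_3],[\ell_{1,1'}]$, whereas $\ell_{2,3}\cdot e_1=\ell_{2,3}\cdot\ell_{1,2}=\ell_{2,3}\cdot\ell_{1,3}=0$ and $\ell_{2,3}^2=-1$ force strictly positive values on the remaining rays $[e_1],[\ell_{2,3}],[\ell_{1,2}],[\ell_{1,3}]$. Consequently $(-K_X-\w D)^\perp\cap\NE(\sigma)$ equals the facet $\langle [e_2],[e_3],[\ell_{1,1'}]\rangle$, and since $-K_X-\w D$ is nef this is exactly the statement that it supports the associated birational contraction $X\to W$, which contracts $E_2$, $E_3$ and $\w H_0$. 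The only delicate point is the identification ${\w D}_{|X_t}=\ell_{2,3}$ and the adjunction reduction to the surface $X_t$; with these in hand the surviving computations are routine del Pezzo bookkeeping.
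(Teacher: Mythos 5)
Your fiberwise reduction is correct as far as it goes, and it is a genuinely different route from the paper's for the facet identification: nefness at $t=1$ is indeed Lemma \ref{lem:nef}; the identification $\widetilde D_{|X_t}=\ell_{2,3}$ is right (most directly because $\widetilde D$ is the strict transform of $D\cong\pr^1\times T$, whose fiber over $t$ is the line $\overline{p_2p_3}\subset Z_t$, so $\widetilde D$ meets the general $\sigma$-fiber transversally in $\ell_{2,3}$); and your lattice computations on the quintic del Pezzo $X_t$ correctly show that $-K_X-\widetilde D$ vanishes on $[e_2],[e_3],[\ell_{1,1'}]$ and is strictly positive on the remaining four extremal rays of $\NE(\sigma)$, whereas the paper obtains this by restricting to the $\sigma$-exceptional divisors.

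The gap is in your last sentence. What you have proved is $(-K_X-\widetilde D)^{\perp}\cap\NE(\sigma)=\langle[e_2],[e_3],[\ell_{1,1'}]\rangle$; but being a supporting divisor of $X\to W$ means $(-K_X-\widetilde D)^{\perp}\cap\NE(X)$ equals this facet, i.e.\ strict positivity on \emph{every} curve class outside it, including classes of curves not contracted by $\sigma$. Such classes lie outside $\NE(\sigma)=\iota_*\NE(X_t)$ and are invisible to any computation on the fiber: a priori $-K_X-\widetilde D$ could have degree zero on some horizontal curve (most dangerously one contained in $\widetilde D\cup\widetilde H_0\cup E_2\cup E_3$, where its intersection with these divisors may be negative), in which case the nef divisor would support a strictly larger contraction than $X\to W$ — and the exact identification of the contracted locus is what is used later to pin down the negative part in Proposition \ref{prop:ZD}. (Note that nefness plus vanishing exactly on a face of $\NE(\sigma)$ only shows that $-K_X-\widetilde D+m\,\sigma^*A$, $A$ ample on $T$, $m>0$, supports the contraction of that face; it says nothing about $-K_X-\widetilde D$ itself.) The missing verification is precisely what the paper's proof supplies: for $\Gamma\not\subset\widetilde H_0\cup E_2\cup E_3\cup\widetilde D$, write $-K_X-\widetilde D\sim\sigma^*(-K_T+N)+\widetilde D+\widetilde H_0+E_2+E_3$ via \eqref{equation:anticanonical} and use Remark \ref{rmk:strictly_pos} together with the ampleness of $-K_T+N$; for $\Gamma$ inside one of the four divisors, use the restrictions from the proof of Lemma \ref{lemma:step1} at $t=1$, namely $\sigma^*_{|\widetilde H_0}(-K_T)$ on $\widetilde H_0$ (and likewise on $E_2$, $E_3$) and $2(\eta-\sigma^*N)+\sigma^*_{|\widetilde D}(-K_T)$ on $\widetilde D$, which vanish only on the $\sigma$-fibers contained in $\widetilde H_0\cup E_2\cup E_3$ since $-K_T$ is ample and $\eta-\sigma^*N$ has degree $1$ on the fibers of $\widetilde D\to T$. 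With that supplement your argument closes; as written, it only shows the facet is contained in the zero face, not that it exhausts it.
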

\begin{proof} By \cite[Remark 6.7]{CRS} we know that the contraction $X\to W$ is divisorial, and we show that $-K_X-\widetilde{D}$ is a supporting divisor. From Lemma \ref{lem:nef} and its proof, one has that $-K_X-\widetilde{D}$ is nef and not ample, and the curves on which it vanishes are contained in $\widetilde{H}_0\cup E_2 \cup E_3 \cup \widetilde{D}$. Furthermore, we see from the proof of Lemma \ref{lemma:step1} that $-K_X-\widetilde{D}$ vanishes exactly on the curves contained in the intersection of $\widetilde{H}_0\cup E_2 \cup E_3$ with a general $\sigma$-fiber. This gives the claim.
\end{proof}

\begin{proof}[Proof of Proposition \ref{prop:ZD}]  In view of Lemma \ref{lem:nef}, we are left to understand the decomposition of $-K_X-t\widetilde{D}$ into positive and negative part for $t\geq1$.
By Lemma \ref{lem:supp_div}, being $\widetilde{H}_0\cup E_2\cup E_3$ the exceptional locus of the divisorial contraction having $-K_X-\widetilde{D}$ as a supporting divisor, we need to determine $a(t)$, $b(t)$, $c(t)\geq 0$ and all values of $t\geq 1$ such that the positive part of the Zariski decomposition of $-K_X-t\widetilde{D}$ is
$$P(t)=-K_X-t\widetilde{D}-a(t)\widetilde{H}_0-b(t) E_2-c(t) E_3.$$
Denote by $h_0$, $e_2$, $e_3$ the intersection of a general $\sigma$-fiber with $\widetilde{H}_0$, $E_2$, and $E_3$ respectively. Requiring that $P(t)$ has zero intersection with $h_0$, $e_2$ and $e_3$ yields $a(t)=b(t)=c(t)=t-1$. Set $H(t):= -K_X-t\widetilde{D}-(t-1)(\widetilde{H}_0+ E_2+ E_3)$.
By equation \eqref{equation:anticanonical} in $\S$\ref{sec:symmetry}, we deduce that
$$H(t)=[\sigma^*(-K_T+N)+(2-t)\widetilde{D}]+(2-t)(\widetilde{H}_0+E_2+E_3).$$
Let $\Gamma$ be the intersection of $\widetilde{D}$ with a general $\sigma$-fiber, so that $H(t)\cdot \Gamma=2(2-t)$; thus, $H(t)$ is not nef for $t>2$. Finally, we see that $H(t)$ is nef for $t\leq 2$, and this follows from Remark \ref{rmk:strictly_pos} and the ampleness of $-K_T+N$. 

Since $H(2) \sim \sigma^* (-K_T+N)$ is a nef and not big divisor, we deduce that $\tau(\w D)=2$, hence our claim. 
\end{proof}

Finally, we are able to determine $S(\w D)$ (see $\S$\ref{sub_K_stab} for its definition). We split the computation into the following lemmas.
\begin{lemma}\label{lem:firstpart}
Notation as in Proposition \ref{prop:beta}. Then,
$$\int_{0}^{1} (-K_X-t\w D)^4dt = a- \frac{8}{5}b-8c-6d.$$
\end{lemma}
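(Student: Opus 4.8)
The plan is to compute the top self-intersection $(-K_X-t\widetilde D)^4$ as an explicit polynomial in $t$ and then integrate it over $[0,1]$. First I would expand the fourth power using the binomial theorem, writing
$$
(-K_X-t\widetilde D)^4=\sum_{k=0}^{4}\binom{4}{k}(-t)^k(-K_X)^{4-k}\widetilde D^{\,k}.
$$
The terms I need are therefore the mixed intersection numbers $(-K_X)^{4-k}\cdot\widetilde D^{\,k}$ for $k=0,\dots,4$. The $k=0$ term is just $a=(-K_X)^4$, and the $k=4$ term is $\widetilde D^4=-3N^2=-3b$, both of which are recorded in Remark \ref{rem:computations}. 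The remaining three mixed terms are where the actual work lies.

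The key device for the mixed terms is the projection (adjunction) formula: since $\widetilde D$ is a divisor, for $k\geq 1$ one has
$$
(-K_X)^{4-k}\cdot\widetilde D^{\,k}=\bigl((-K_X)_{\mid\widetilde D}\bigr)^{4-k}\cdot\bigl(\widetilde D_{\mid\widetilde D}\bigr)^{k-1},
$$
so every mixed number is an intersection computed entirely inside the threefold $\widetilde D\cong\mathbb P_T(\mathcal O(-K_T-N)\oplus\mathcal O(-K_T-N))$. Here I would substitute the two identities from Remark \ref{rem:computations}, namely $\widetilde D_{\mid\widetilde D}=-\eta$ and $(-K_X)_{\mid\widetilde D}\sim\eta+\sigma_{\mid\widetilde D}^*(-K_T-2N)$, and then reduce everything using the tautological relation on the $\mathbb P^1$-bundle together with the facts that $\eta^2\sim\eta\cdot\sigma_{\mid\widetilde D}^*(2N)-\sigma_{\mid\widetilde D}^*(N)^2$ (equivalently the stated $\widetilde D^3$ formula) and $(\sigma^*M)^i\sim 0$ on $X$ for $i\geq 3$, so that on the threefold $\widetilde D$ only products of $\eta$ with at most $\sigma_{\mid\widetilde D}^*$ of two divisor classes on $T$ survive. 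Expressing the surviving $T$-intersection numbers through the abbreviations of Proposition \ref{prop:beta}, I expect each mixed term to come out as a short linear combination of $b=N^2$, $c=(-K_T-N)^2$ and $d=N\cdot(-K_T-N)$.

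Once the polynomial $(-K_X-t\widetilde D)^4=a+c_1 t+c_2 t^2+c_3 t^3+c_4 t^4$ is assembled, the integration is routine: $\int_0^1 t^k\,dt=\tfrac{1}{k+1}$, so I collect the coefficients against $1,\tfrac12,\tfrac13,\tfrac14,\tfrac15$ and simplify. The main obstacle is not conceptual but bookkeeping: keeping the binomial signs straight and correctly carrying out the repeated substitution $\eta^2=\eta\cdot\sigma^*(2N)-\sigma^*(N)^2$ inside $\widetilde D$ without dropping a $\sigma^*$-term, since a single arithmetic slip in one of the three mixed intersection numbers would corrupt the final coefficients. I would double-check by verifying that the constant term is exactly $a$ and that the coefficient structure is consistent with the known value $\widetilde D^4=-3b$ at the $t^4$ end, which fixes $c_4$ independently and serves as a useful sanity check against the target expression $a-\tfrac{8}{5}b-8c-6d$.
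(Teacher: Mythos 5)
Your plan is exactly the paper's proof: the paper also reduces each mixed number $(-K_X)^{4-k}\cdot\widetilde D^{\,k}$ to an intersection on $\widetilde D$ using $\widetilde D_{\mid\widetilde D}=-\eta$, ${-K_X}_{\mid\widetilde D}\sim\eta+\sigma_{\mid\widetilde D}^*(-K_T-2N)$ and the tautological relation from Remark \ref{rem:computations}, obtaining $(-K_X-t\widetilde D)^4=a-12ct-6(c+2d)t^2-4(b+2d)t^3-3bt^4$ and then integrating termwise. Carrying out the three mixed computations in your scheme yields precisely these coefficients (e.g.\ $-K_X\cdot\widetilde D^3=2N\cdot(-K_T-N)+N^2$), confirming your sanity checks at $k=0$ and $k=4$, so the proposal is correct and essentially identical in approach.
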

\begin{proof}
We compute $(-K_X-t\w D)^4$. By Remark \ref{rem:computations}, we have:
\begin{itemize}\renewcommand\labelitemi{\tiny \textbullet}
\item $-K_X^3\cdot \w D=({-K_X}_{|\w D})^3=3(-K_T-N)^2$;
\item $-K_X^2\cdot \w D^2=({-K_X}_{|\w D})^2\cdot(-\eta)=-(-K_T-N)^2-2N\cdot(-K_T-N)$;
\item $-K_X\cdot \w D^3=({-K_X}_{|\w D})\cdot \eta^2=2N\cdot(-K_T-N)+N^2$.
\end{itemize}
Therefore,
$$(-K_X-t\w D)^4=a-12ct-6(c+2d)t^2-4(b+2d)t^3-3bt^4$$
and the claim follows.
\end{proof}

\begin{lemma}\label{lem:secondpart}
Notation as in Proposition \ref{prop:beta}. Then,
$$\int_{1}^{2} H(t)^4 dt = \frac{6}{5}b-4f+4e.$$
\end{lemma}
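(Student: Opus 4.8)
Lemma \ref{lem:secondpart} computes $\int_1^2 H(t)^4\,dt$, where by Proposition \ref{prop:ZD} we have $H(t)=\sigma^*(-K_T+N)+(2-t)(\widetilde D+\widetilde H_0+E_2+E_3)$. Let me sketch how I'd prove this.

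The plan is to expand $H(t)^4$ as a polynomial in $s:=2-t$ and then integrate $s$ from $0$ to $1$ (since $t\in[1,2]$ corresponds to $s\in[0,1]$). Write $A:=\sigma^*(-K_T+N)$ and $B:=\widetilde D+\widetilde H_0+E_2+E_3$, so $H(t)=A+sB$ and $H(t)^4=\sum_{k=0}^4\binom{4}{k}s^k A^{4-k}B^k$. Since $A=\sigma^*(-K_T+N)$ is a pullback from the surface $T$, Remark \ref{rem:computations} gives $A^i=0$ for $i=3,4$ (as $(\sigma^*M)^i\sim 0$ for $i\geq 3$). This kills the $s^0$ and $s^1$ terms entirely, so only $k=2,3,4$ survive:
\begin{equation*}
H(t)^4 = 6s^2\,A^2B^2 + 4s^3\,A\,B^3 + s^4\,B^4.
\end{equation*}

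First I would compute the three intersection numbers $A^2B^2$, $AB^3$, $B^4$. The key simplifications come from Remark \ref{rem:computations}: the divisors $E_2,E_3,\widetilde H_0$ are pairwise disjoint, each satisfies $(\widetilde H_0)^2\cdot\widetilde D=0$ and $(\widetilde H_0)^3\cdot\widetilde D=0$ (same for $E_2,E_3$), and one has the self-intersection data $\widetilde D^4=-3N^2$, $(\widetilde H_0)^4=-N^2$ (same for $E_2,E_3$), together with the restriction formulas for $-K_X$ to each divisor. When expanding $B^2,B^3,B^4$, the pairwise-disjointness and the vanishing of mixed triple products with $\widetilde D$ collapse most cross-terms, leaving a manageable combination of the tautological self-intersections. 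For the terms involving $A=\sigma^*(-K_T+N)$, I would restrict to each component divisor and use the projection formula together with the tautological-divisor descriptions in Remark \ref{rem:computations}(i)--(ii) — e.g. $-K_X{}_{|\widetilde H_0}\sim\xi+\sigma^*(-K_T-N)$ and $\widetilde H_0{}_{|\widetilde H_0}=-\xi$ — to reduce everything to intersection numbers on $T$ expressible in $b=N^2$, $c=(-K_T-N)^2$, $d=N\cdot(-K_T-N)$, $e=(-K_T+N)^2$, $f=N\cdot(-K_T+N)$.

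Once the three coefficients are in hand, the integral is routine: $\int_0^1 6s^2\,ds=2$, $\int_0^1 4s^3\,ds=1$, $\int_0^1 s^4\,ds=\tfrac15$, so
\begin{equation*}
\int_1^2 H(t)^4\,dt = 2\,A^2B^2 + A\,B^3 + \tfrac15\,B^4,
\end{equation*}
and I'd check this assembles to $\tfrac{6}{5}b-4f+4e$. The main obstacle is purely bookkeeping: correctly tracking the cross-terms in $B^3$ and especially $B^4$, where naively there are $4^3$ and $4^4$ terms. The symmetry of Remark \ref{rem:symmetry}, which permutes $(E_2,E_3,\widetilde H_0)$ and fixes $\widetilde D$, should let me compute the contribution of one of $E_2,E_3,\widetilde H_0$ and multiply by three, cutting the work substantially; the genuinely independent computations are then just the $\widetilde D$-self-terms, the single $E_2$-type term, and the mixed $A$-terms, all of which reduce to the six surface invariants via the restriction formulas already recorded.
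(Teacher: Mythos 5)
Your proposal is correct, and in substance it is the same computation as the paper's --- a binomial expansion of $H(t)^4$ reduced to the intersection data of Remark \ref{rem:computations}, followed by an elementary integration --- but you organize the expansion with a different grouping. The paper writes $H(t)=[\sigma^*(-K_T+N)+(2-t)\w D]+(2-t)(\w H_0+E_2+E_3)$ and computes the five coefficients $(2-t)^i[\sigma^*(-K_T+N)+(2-t)\w D]^{4-i}\cdot(\w H_0+E_2+E_3)^i$ for $i=0,\dots,4$, each carried out by restricting to a single exceptional divisor; your grouping $H(t)=A+sB$ with $A=\sigma^*(-K_T+N)$, $B=\w D+\w H_0+E_2+E_3$, $s=2-t$ is slightly more economical, since $A^3\sim A^4\sim 0$ kills the $s^0$ and $s^1$ terms at once and leaves only the three numbers $A^2B^2$, $AB^3$, $B^4$. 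You stop short of actually evaluating these, but the ingredients you cite suffice and the values check out against the paper's Steps 1--4: writing $B=\w D+S$ with $S=\w H_0+E_2+E_3$ and using $\w D^4=-3b$, $\w D^3\cdot S=3b$, $\w D^2\cdot S^2=\w D\cdot S^3=0$, $S^4=-3b$, one gets $B^4=-3b+12b-3b=6b$; the values $A\cdot\w D^3=2f$, $A\cdot\w D^2\cdot S=-3f$, $A\cdot\w D\cdot S^2=0$, $A\cdot S^3=3f$ give $AB^3=2f-9f+3f=-4f$; and $A^2\cdot\w D^2=-e$, $A^2\cdot\w D\cdot S=3e$, $A^2\cdot S^2=-3e$ give $A^2B^2=-e+6e-3e=2e$. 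Hence $H(t)^4=12e\,s^2-16f\,s^3+6b\,s^4$, exactly the polynomial the paper obtains, and $\int_1^2 H(t)^4\,dt=2A^2B^2+AB^3+\tfrac{1}{5}B^4=4e-4f+\tfrac{6}{5}b$ as claimed. One small caution: your phrase about \virg{the vanishing of mixed triple products with $\w D$} is loose --- only $(\w H_0)^2\cdot\w D\sim0$ and $(\w H_0)^3\cdot\w D=0$ (and the $E_2$, $E_3$ analogues) hold, whereas $\w D^2\cdot\w H_0$ and $\w D^3\cdot\w H_0$ do \emph{not} vanish and carry essential contributions; they must be computed by restricting to $\w D$, where $\w H_0\cap\w D$ is a section $\{pt\}\times T$, exactly as the paper does in its Step 2, and your stated plan of restricting to each component does cover this.
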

\begin{proof}
We recall that $H(t)=[\sigma^*(-K_T+N)+(2-t)\widetilde{D}]+(2-t)(\widetilde{H}_0+E_2+E_3)$. In order to obtain $H(t)^4$, we compute the intersections
$$(2-t)^i[\sigma^*(-K_T+N)+(2-t)\widetilde{D}]^{4-i}\cdot(\widetilde{H}_0+E_2+E_3)^i,$$
for $i=0,\dots,4$. We use Remark \ref{rem:computations} for the following computations.

\begin{step}
$[\sigma^*(-K_T+N)+(2-t)\widetilde{D}]^4=-6e(2-t)^2+8f(2-t)^3-3b(2-t)^4$.
\end{step}
Indeed:
\begin{itemize}\renewcommand\labelitemi{\tiny \textbullet}
\item $\sigma^*(-K_T+N)^2\cdot \w D^2=\sigma_{|\w D}^*(-K_T+N)^2\cdot(-\eta)=-(-K_T-N)^2$;
\item $\sigma^*(-K_T+N)\cdot \w D^3=\sigma_{|\w D}^*(-K_T+N)\cdot\eta^2= 2N\cdot(-K_T+N)$.
\end{itemize}

\begin{step}
$[\sigma^*(-K_T+N)+(2-t)\widetilde{D}]^3\cdot(\w H_0+E_2+E_3)=9e(2-t)-9f(2-t)^2+3b(2-t)^3$.
\end{step}
Recall that ${\w H_0}\cap{\w D}$ is a section $\{pt\}\times T$ of $\w D$. By restricting to $\w D$ we obtain:
\begin{itemize}\renewcommand\labelitemi{\tiny \textbullet}
\item $\sigma^*(-K_T+N)^2\cdot \widetilde{D} \cdot(\w H_0+E_2+E_3)=3(-K_T+N^2)$;
\item $\sigma^*(-K_T+N)\cdot \widetilde{D}^2 \cdot(\w H_0+E_2+E_3)=-3N\cdot(-K_T+N)$;
\item $\widetilde{D}^3 \cdot(\w H_0+E_2+E_3) \cdot 3N^2$.
\end{itemize}

\begin{step}
$[\sigma^*(-K_T+N)+(2-t)\widetilde{D}]^2\cdot(\w H_0+E_2+E_3)^2=-3e$.
\end{step}
Recall that $E_2$, $E_3$ and $\w H_0$ are pairwise disjoint. Thus:
\begin{itemize}\renewcommand\labelitemi{\tiny \textbullet}
\item $\sigma^*(-K_T+N)^2 \cdot[(\w H_0)^2+(E_2)^2+(E_3)^2]=-3(-K_T+N)^2$;
\item $\sigma^*(-K_T+N)\cdot \w D \cdot[(\w H_0)^2+(E_2)^2+(E_3)^2]=0$;
\item $\w D^2 \cdot[(\w H_0)^2+(E_2)^2+(E_3)^2]=0$.
\end{itemize}

\begin{step}
$[\sigma^*(-K_T+N)+(2-t)\widetilde{D}]\cdot(\w H_0+E_2+E_3)^3=3f$.
\end{step}
Indeed:
\begin{itemize}\renewcommand\labelitemi{\tiny \textbullet}
\item $\sigma^*(-K_T+N)\cdot[(\w H_0)^3+(E_2)^3+(E_3)^3]=3N\cdot (-K_T+N)$;
\item $\w D \cdot[(\w H_0)^3+(E_2)^3+(E_3)^3]=0$.
\end{itemize}
\vspace{0.3cm}
We conclude that
$$H(t)^4=6b(2-t)^4-16f(2-t)^3+12e(2-t)^2,$$
and the claim follows.
\end{proof}

\begin{proof}[Proof of Proposition \ref{prop:beta}.]
We compute $\beta(\w D)=A(\w D)-S(\w D)$. Since $\w D\subset X$ is a prime divisor on $X$, we have $A(\w D)=1$. Moreover, due to Proposition \ref{prop:ZD}, we can compute
$$a\cdot S(\w D)= \int_{0}^{2} \vol(-K_X-t\w D) dt$$
by splitting it as
$$\int_{0}^{1} (-K_X-t\w D)^4 dt \ + \int_{1}^{2} H(t)^4dt.$$
Thus, the claim follows from Lemma \ref{lem:firstpart} and Lemma \ref{lem:secondpart}.
\end{proof}

We now apply Proposition \ref{prop:beta} and conclude this section with the proof of Proposition \ref{prop:non-toric}.

\begin{proof}[Proof of Proposition \ref{prop:non-toric}] Assume that $X$ is a product. Then, by the classification of Fano $4$-folds having $\delta=3$ (see Theorem \ref{classification} and Remark \ref{rem:toric}) it follows that $X\cong \mathbb{P}^1\times Y$ with $Y$ being as in the statement. Recall that $Y$ is $\ma N^{\circ} 5.1$ in the notation of \cite{Araujo23}. By \cite[Main Theorem]{Araujo23} we know that $Y$ is K-polystable, then using Lemma \ref{lem:product} we conclude that $X$ is K-polystable.

Suppose now that $X$ is not a product. We show that for all the remaining four families of varieties of our classification we have $\beta(\widetilde{D})<0$, so that we conclude by Theorem \ref{thm_beta} that they are not K-semistable, hence not K-polystable. Being $a>0$, in view of Proposition \ref{prop:beta} we are left to prove that $\lambda:=\frac{2}{5}b+8c+6d-4e+4f<0$. 

Assume first that $\rho_X=5$. By construction B, one has $T=\mathbb{P}^2$ and by the proof of \cite[Theorem 1.3]{CR} we know that either $N=\mathcal{O}(1)$ or $N=\mathcal{O}(2)$. Using the numerical invariants of the corresponding varieties computed in \cite[Table 3.4]{CR}, we see that $\lambda=-\frac{18}{5}$ in the first case and $\lambda=-\frac{192}{5}$ in the second case. 

Assume now that $\rho_X=6$. Construction B gives either $T=\mathbb{F}_1$ or $T=\mathbb{P}^1\times \mathbb{P}^1$. In the first case, by the proof of \cite[Proposition 7.1]{CRS} we know that $N=\pi^*L$, where $\pi\colon \mathbb{F}_1\to \mathbb{P}^2$ is the blow-up and $L$ general line in $\mathbb{P}^2$. For this variety, using the numerical invariants of \cite[Table 7.1]{CRS} we get $\lambda=-\frac{38}{5}$. Otherwise, by the proof of the same proposition we have $N=\mathcal{O}(1,1)$, and we obtain that $\lambda=-\frac{96}{5}$, hence our claim. 
\end{proof}

\subsection{Proof of Theorem \ref{thm:main}} We obtain the proof of Theorem \ref{thm:main} as a direct consequence of Remark \ref{rem:delta4}, of the classification theorem of Fano $4$-folds having $\delta=3$ (see Theorem \ref{classification}, Remark \ref{rem:toric}) and Propositions \ref{prop:toric_case}, \ref{prop:non-toric}. We summarize our results in Section \ref{sec:tables}: Table \ref{table:delta3} gathers all Fano 4-folds with $\delta=3$, while Fano 4-folds with $\delta \geq 4$ appear in Table \ref{table:high-delta}. \qed

\section{Casagrande-Druel varieties}  \label{sec:delta2}
In this section we recap the construction and the classification in dimension $4$ of the so-called Casagrande-Druel Fano varieties. They have $\delta\geq2$. This construction was first introduced in \cite[Example 3.4, Theorem 3.8]{CD} to give a geometrical characterization of all Fano manifolds with $\rho=3$ having a prime divisor of Picard number $1$, i.e.\ Fano manifolds with $\rho=3$ and $\delta=2$, and the $4$-dimensional case was classified in \cite{Secci23}. The construction from \cite{CD} was first generalized in \cite{CD-stability}, and then further generalized in \cite[Construction A, $\S2$]{Pier25}, where the author completed the classification of smooth Casagrande-Druel Fano $4$-folds with $\delta=2$.

\subsection{Casagrande-Druel Fano $4$-folds: construction and classification.}\label{CD-construction}
We follow the notation from \cite[$\S2$]{Pier25}. Let $Z$ be a smooth Fano $3$-fold; fix $D\in\Pic(Z)$ and a smooth irreducible hypersurface $A\subset Z$, and let $Y:=\pr_Z(\ol_Z\oplus\ol_Z(D))$ with projection $\pi\colon Y \to Z$.
Let $G_Y$ and $\hat G_Y$ be sections of $\pi$ corresponding to the projections $\ol\oplus\ol(D)\twoheadrightarrow \ol$ and $\ol\oplus\ol(D)\twoheadrightarrow \ol(D)$, respectively, so that $G_Y\cap \hat G_Y=\emptyset$. Let $\sigma\colon X\to Y$ be the blow-up along $\hat G_Y \cap \pi^{-1}(A)$, and let $E$ be the exceptional divisor.
Then $\phi:=\pi\circ\sigma\colon X\to Z$ is a conic bundle with discriminant divisor $\Delta_\phi=A$. Denote by $G$ and $\hat G$ the strict transforms in $X$ of $G_Y$ and $\hat G_Y$, respectively. It follows that
\begin{equation}\label{eq:normal G}
\ma N_{G/X}\cong\ol_Z(-D)\, ,\quad \ma N_{\hat G/X}\cong\ol_Z(D-A)\quad \text{and}\quad G\cap\hat G=\emptyset\, .
\end{equation}

There exists a second factorization of $\phi$ as $\hat\pi\circ\hat\sigma$, where $\hat Y:=\pr_Z(\ol_Z\oplus\ol_Z(A-D))$ with projection $\hat\pi\colon \hat Y\to Z$, and $\hat\sigma\colon X\to \hat Y$ is the blow-up along $\hat\sigma(G)\cap\hat\pi^{-1}(A)$, with exceptional divisor $\hat E$ coinciding with the strict transform in $X$ of $\pi^{-1}(A)\subset Y$.
$$
\xymatrix{
 & X \ar[rd]^{\sigma}\ar[ld]_{\hat \sigma} \ar[dd]^{\phi}& \\
\hat Y \ar[rd]_{\hat\pi} & & Y \ar[ld]^{\pi} \\
 & Z & 
}
$$

We list some properties of $X$:
\begin{itemize}\renewcommand\labelitemi{\tiny \textbullet}
\item the smooth $4$-folds constructed as above from $(Z;A,D)$ and $(Z;A,A-D)$ coincide, with all objects with and without a hat $(\ \hat{}\ )$ interchanged;
\item $X$ is Fano if and only if $-K_Z-D$ and $-K_Z-A+D$ are ample on $Z$, \cite[Prop.\ 2.3]{Pier25};
\item if $X$ is Fano, then $\delta_X \geq 2$, \cite[Lemma 2.4]{Pier25}.
\end{itemize}

Finally, we collect here useful relations among divisor classes, and intersection numbers. Let $G$, $\hat G$, $E$, $\hat E$ as above, and let $\xi= \hat G_Y$ and $\hat \xi= \hat \sigma (G)$ be the tautological classes of $Y=\pr_Z(\ol_Z\oplus\ol_Z(D))$ and $\hat Y=\pr_Z(\ol_Z\oplus\ol_Z(A-D))$ respectively. Then,
\begin{equation}\label{eq:relationsCD}
\begin{gathered}
\xi-\pi^*D\sim G_Y\, , \qquad \hat\xi-\hat\pi^*(A-D)\sim \hat\sigma (\hat G) \, , \\
\sigma^*\xi\sim \hat G +E\, , \quad \hat\sigma^*\hat\xi\sim G + \hat E \, , \quad \sigma^*G_Y \sim G \, , \quad \hat \sigma^*(\hat\sigma (\hat G)) \sim \hat G \, , \\
-K_X\sim \sigma^*(-K_Y)-E \sim\hat \sigma^*(-K_{\hat Y})-\hat E\sim \phi^*(-K_Z) +G+\hat G\, .
\end{gathered}
\end{equation}
Let $e\subset E$ and $\hat e\subset \hat E$ be fibers of the blow-ups $\sigma$ and $\hat\sigma$, respectively. Then we obtain the following intersection numbers: 

\begin{table}[!htbp]
\begin{tabular}{|c|c|c|c|c|}
\hline
\rule{0pt}{2.7ex} $\cdot$ & $E$ & $\hat E$ & $G$ & $\hat G$ \\
\hline
\rule{0pt}{2.7ex} $e$ & -1 & 1 & 0 & 1 \\
\hline
\rule{0pt}{2.7ex} $\hat e$ & 1 & -1 & 1 & 0 \\
\hline
\end{tabular}
\end{table}

\begin{remark}
In \cite{CD} and \cite{Secci23}, where $Z$ has Picard number $1$ and $\Pic(Z)=\Z\cdot\ol_Z(1)$ with $\ol_Z(1)$ the ample generator, the role of $A$ and $D$ are somewhat reversed compared to \cite{Pier25}: indeed, $A$ is a smooth irreducible hypersurface in $|\ol_Z(d)|$ and $D\sim \ol_Z(a)$, so that $Y:=\pr_Z(\ol_Z\oplus\ol_Z(a))$. Thus, the pair of divisors $(A,D)$ from \cite{Pier25} corresponds to the pair of integers $(d,a)$ in \cite{CD} and \cite{Secci23}.
\end{remark}

We finally recall the classification results of smooth Casagrande-Druel Fano $4$-folds with $\delta=2$. Note that $\rho\geq 3$ by construction and $\rho\leq 6$ by \cite[Theorem 8.2]{Pier25}; more generally, the bound $\rho\leq6$ also follows from \cite[Theorem 1.8]{CS24}.
\begin{thm}[\cite{Secci23}, Thm.\ 1.1]\label{thm:Saverio}
There are $28$ deformation families of smooth Casagrande-Druel Fano $4$-folds with $\delta=2$ and $\rho=3$. Among them, $3$ families are toric.
\end{thm}

\begin{thm}[\cite{Pier25}, Thm.\ 1.4]\label{thm:Pier}
There are $147$ deformation families of smooth Casagrande-Druel Fano $4$-folds with $\delta=2$ and $\rho\in\{4,5,6\}$. Among them, $43$ families are toric.
\end{thm}

\section{Proof of Theorem \ref{thm:main2}} \label{sec:proof2}
In this section we prove Theorem \ref{thm:main2}. We keep the same notation as in the previous section.

\begin{proposition}\label{prop:beta-G}
Let $X$ be a smooth Casagrande-Druel Fano $4$-fold constructed from $(Z;A,D)$ as in $\S$\ref{CD-construction}, and let $f\colon \ma N^1(Z) \to \R$ be the continuous function
$$f(\zeta)=4K_Z^2\cdot\zeta+3K_Z\cdot \zeta^2+\frac{4}{5}\zeta^3.$$
Then, $\beta(\hat G)=-\beta(G)=\frac{1}{(-K_X)^4}\bigl(f(D)-f(A-D)\bigr)$.
\end{proposition}

\begin{corollary}\label{co:beta-G}
Let $X$ be a smooth Casagrande-Druel Fano $4$-fold with $\rho_X=3$ constructed from $(Z;A,D)$ as in $\S$\ref{CD-construction}. Then, $\beta(G)=\beta(\hat G)=0$ if and only if $A\sim 2D$.
\end{corollary}

We now proceed with the Zariski decomposition of $-K_X-t\hat{G}$ and $-K_X-t{G}$ when $t\geq 0$, and then prove Proposition \ref{prop:beta-G} and Corollary \ref{co:beta-G}.
\begin{proposition}\label{prop:ZD-G}
Let $X$ be a smooth Casagrande-Druel Fano $4$-fold constructed from $(Z;A,D)$ as in $\S$\ref{CD-construction}. Then the Zariski decomposition of the divisors $-K_X-t\hat{G}$ and $-K_X-t{G}$ is given respectively by
\begin{center}
$$
P(t)=
\begin{cases}
-K_X-t\hat{G}, & t\in \left[0,1\right]\\
\sigma^*\bigl((2-t)\xi+ \pi^*(-K_Z-D)\bigr), & t\in \left(1,2\right]
\end{cases}
$$
and
$$
P(t)=
\begin{cases}
-K_X-t{G}, & t\in \left[0,1\right]\\
\hat\sigma^*\bigl((2-t)\hat\xi+\hat \pi^*(-K_Z-A+D)\bigr), & t\in \left(1,2\right]
\end{cases}
$$
\end{center}
and the pseudoeffective threshold of $\hat G$ and $G$ with respect to $-K_X$ is $\tau(\hat G)=\tau(G)=2$. 
\end{proposition}
\begin{proof}
By the simmetry of the contruction, it suffices to show the statement for $-K_X-t\hat G$; the Zariski decomposition of $-K_X-t G$ is then obtained by swapping $D$ and $A-D$, as well as all objects with and without a hat $(\ \hat{}\ )$. Recall from $\S$\ref{CD-construction} that $-K_Z-D$ and $-K_Z-A+D$ are ample on $Z$.
\begin{step}\label{step:nef-G}
Let $t\geq 0$. Then, $-K_X-t \hat G$ is nef for $0\leq t \leq 1$ and $-K_X-\hat G$ is a supporting divisor of $\sigma \colon X \to Y$.
\end{step}
Note first that $(-K_X-t \hat G)\cdot e =1-t$, thus $-K_X-t \hat G$ is not nef for $t>1$ and we can assume $t\leq 1$. By \eqref{eq:relationsCD} we can write
$$-K_X-t \hat G \sim \phi^*(-K_Z) + G + (1-t)\hat G$$
as a sum of a nef divisor and effective divisors. Thus, to show that $-K_X-t \hat G$ is nef for $0\leq t \leq 1$ we are left to prove that its restriction to $G$ and $\hat G$ is nef for $0\leq t \leq 1$.
From \eqref{eq:normal G} we have that
\begin{itemize}\renewcommand\labelitemi{\tiny \textbullet}
\item $(-K_X-t \hat G)_{|G} \sim -K_Z -D$ is ample, and
\item $(-K_X-t \hat G)_{|\hat G} \sim -K_Z + (1-t)(D-A)=t(-K_Z)+(1-t)(-K_Z-D+A)$ is ample for $0\leq t\leq 1$, as it is the sum of ample divisors.
\end{itemize}
Lastly, it follows from \eqref{eq:relationsCD} that $-K_X-\hat G\sim\sigma^*\bigl(\xi+\pi^*(-K_Z-D)\bigr)$. Thus, $\xi+\pi^*(-K_Z-D)$ is the tautological class of $\pr_Z(\ol_Z(-K_Z-D)\oplus \ol_Z(-K_Z))$ and is ample on $Y$. This concludes the proof.

\begin{step}
The positive part of the Zariski decomposition of $-K_Z-t\hat G$ for $1\leq t\leq 2$ is $P(t)=\sigma^*\bigl((2-t)\xi+ \pi^*(-K_Z-D)\bigr)$, and $-K_Z-2\hat G$ is nef and not big.
\end{step}
Assume $t\geq 1$. By Step \ref{step:nef-G}, $-K_X-\hat G$ is a supporting divisor of $\sigma \colon X \to Y$. We therefore need to determine $u(t)\geq 0$ and all values of $t\geq 1$ such that the positive part of the Zariski decomposition of $-K_X- t\hat G$ is $P(t)=-K_X-t\hat G-u(t)E$. Since $-K_X-\hat G$ is a supporting divisor of $\sigma$ we have $(-K_X-t\hat G-u(t)E)\cdot e=0$, which gives $u(t)=t-1$.
Set $H(t):=-K_X-t\hat G- (t-1)E$. Note that $H(t)\cdot \hat e = 2-t$, thus $H(t)$ is not nef for $t>2$ and we may therefore assume $t\leq 2$.
By \eqref{eq:relationsCD} we can write
$$H(t)\sim(2-t)(\hat G+E)+\phi^*(-K_Z-D)\sim \sigma^*\bigl((2-t)\xi+ \pi^*(-K_Z-D)\bigr).$$
By Step \ref{step:nef-G} we recall that $\xi+\pi^*(-K_Z-D)$ is ample on $Y$. Thus, since $(2-t)\xi+ \pi^*(-K_Z-D)\sim (2-t)\bigl(\xi+\pi^*(-K_Z-D)\bigr)+(t-1)\pi^*(-K_Z-D)$ is the sum an ample divisor and a nef divisor for $1\leq t\leq2$, it is ample. We conclude that $H(t)$ is nef for $1\leq t\leq2$ and $H(2)$ is not big. This concludes the proof.
\end{proof}

\begin{proof}[Proof of Proposition \ref{prop:beta-G}]
We compute $\beta(\hat G)=A(\hat G)-S(\hat G)$. Since $\hat G\subset X$ is a prime divisor on $X$, we have $A(\hat G)=1$. Moreover, due to Proposition \ref{prop:ZD-G}, we can compute
$$(-K_X)^4\cdot S(\hat G)= \int_{0}^{2} \vol(-K_X-t\hat G) dt$$
by splitting the integral as
$$\int_{0}^{1} (-K_X-t\hat G)^4 dt \ + \int_{1}^{2} \bigl((2-t)\xi+ \pi^*(-K_Z-D)\bigr)^4dt.$$
The statement for $\beta(\hat G)$ follows from the two steps below, and for $\beta(G)$ by the simmetry of the contruction, i.e.\ by swapping $D$ and $A-D$, and items with and without a hat $(\ \hat{}\ )$.

\begin{step}
$\int_{0}^{1} (-K_X-t\hat G)^4 dt=(-K_X)^4-2(-K_Z)^3+f(A-D)$.
\end{step}
By \eqref{eq:normal G} and \eqref{prop:beta-G} we have ${-K_X}_{|\hat G}\sim(-K_Z+D-A)$ and ${\hat G}_{|\hat G}\sim D-A$, so that
$$\alpha_i:=(-K_X)^{4-i}\cdot\hat G^i=(-K_Z+D-A)^{4-i}\cdot(D-A)^{i-1}$$
for $i\in\{1,\dots,4\}$. Therefore, $(-K_X-t\hat G)^4=(-K_X)^4-4\alpha_1t+6\alpha_2t^2-4\alpha_3t^3+\alpha_4t^4$ and
$$\int_{0}^{1} (-K_X-t\hat G)^4 dt=(-K_X)^4-2\alpha_1+2\alpha_2-\alpha_3+\frac{1}{5}\alpha_4,$$
and the statement follows from simple computations.

\begin{step}
$\int_{1}^{2} \bigl((2-t)\xi + \pi^*(-K_Z-D)\bigr)^4dt=2(-K_Z)^3-f(D)$.
\end{step}
Recall from $\S$\ref{CD-construction} that $\xi=\hat G_Y$ and that $\ma N_{\hat G_Y/Y}\sim \ol_Z(D)$, so that
$$\gamma_i:=\pi^*(-K_Z-D)^{4-i}\cdot (\hat G_Y)^i=(-K_Z-D)^{4-i}\cdot D^{i-1}$$
for $i\in\{1,\dots,4\}$ and $\pi^*(-K_Z-D)^4=0$. Therefore, $\bigl((2-t)\xi + \pi^*(-K_Z-D)\bigr)^4=4\gamma_1(2-t)+6\gamma_2(2-t)^2+4\gamma_3(2-t)^3+\gamma_4(2-t)^4$ and
$$\int_{1}^{2} \bigl((2-t)\xi + \pi^*(-K_Z-D)\bigr)^4dt=2\gamma_1+2\gamma_2+\gamma_3+\frac{1}{5}\gamma_4,$$
and the statement follows from simple computations.
\end{proof}

\begin{proof}[Proof of Corollary \ref{co:beta-G}]
It clearly follows from Proposition \ref{prop:beta-G} that $\beta(\hat G)=\beta(G)=0$  if $A\sim 2D$. Assume now $\rho_X=3$, so that $\rho_Z=1$ and $\ma N^1(Z)\cong \R$: then $f\colon \ma N^1(Z) \to \R$ is given by $f(\zeta)=4i_Z^2\zeta-3i_Z\zeta^2+\frac{4}{5}\zeta^3$, where $i_Z$ is the index\footnote{The index $i_X$ of a smooth Fano variety $X$ is the largest integer that divides $-K_X$ in $\Pic(X)$.} of $Z$. It is easy to check that $f$ is an increasing function, thus injective, and the statement follows.
\end{proof}

\subsection{Proof of Theorem \ref{thm:main2}} The proof of Theorem \ref{thm:main2} is a consequence of Proposition \ref{prop:beta-G}, the classification of Casagrande-Druel Fano 4-folds (Theorem \ref{thm:Saverio} and Theorem \ref{thm:Pier}) and \cite[Main Theorem]{Araujo23}.

Since $\beta(\hat G)=-\beta(G)$ by Proposition \ref{prop:beta-G}, either $\beta(G)\leq 0$ or $\beta(\hat G)\leq0$.

If $\rho_X=3$, then $\beta(\hat G)=\beta(G)=0$  if and only if $A\sim 2D$ (by Corollary \ref{co:beta-G}), which holds precisely when $X$ is $X^i_{a,d}$ from \cite{Secci23} with $d=2a$: this proves item (i).

If $\rho_X \geq 4$ and $A\sim 2D$, then $\beta(\hat G)=\beta(G)=0$ and $X$ is one of the following varieties from \cite{Pier25}: $\#4$-$2$, $\#5$-$2$, $\#6$-$4$, $\#7$-$17$, $\#7$-$23$, $\#8$-$16$, $\#8$-$18$, $\#9$-$4$, $\#10$-$5$, $\#11$-$2$, $\#11$-$3$, $\#12$-$2$, $\#13$-$2$, $\#14$-$4$, $\#15$-$3$, $\#16$-$4$, $\#17$-$3$, $\#18$-$4$, $\#19$-$5$, $\#19$-$9$, $\#20$-$8$, $\#20$-$10$, $\#21$-$14$, $\#21$-$21$, $\#21$-$35$, $\#21$-$37$, $\#22$-$15$, $\#22$-$17$, $\#23$-$5$.
If $\rho_X \geq 4$ and $A\not\sim 2D$ (118 out of 147 families), we explicitly compute the $\beta$-invariant of $G$ (and $\hat G$) through Proposition \ref{prop:beta-G} and see that $\beta(\hat G)=\beta(G)=0$ if and only if $X$ is $\#7$-$10$, $\#7$-$21$, $\#19$-$7$ or $\#21$-$19$. This proves item (ii).

To conclude the proof, note that $\#7$-$10$,  $\#7$-$17$, $\#8$-$16$, $\#21$-$14$ and $\#21$-$21$ are the product of $\pr^1$ with the Fano 3-fold $\ma N^{\circ} 4.7$, $\ma N^{\circ} 4.2$, $\ma N^{\circ} 4.4$, $\ma N^{\circ} 3.19$ and $\ma N^{\circ} 3.9$ (in the notation of \cite{Araujo23}), respectively. These Fano 3-folds are K-polystable by \cite[Main Theorem]{Araujo23}, and the last statement follows from Lemma \ref{lem:product}. \qed

\begin{remark}
K-polystability in the cases where $A\sim D-A$ could be studied via \cite[Theorem 1.9]{CD-stability} and \cite[Theorem 1.1]{Mallory24}. Moreover, Theorem \ref{thm:main2} is closely related to \cite[Theorem 1.3]{Mallory24}.
\end{remark}

\section{Final tables}\label{sec:tables}
We collect here the results from Section \ref{sec:proof}, the notation in the tables is as follows. In the first column we use the description of Construction B from $\S$\ref{sec:relB} for the non-toric Fano 4-folds with $\delta=3$, while we use the notation in \cite{Baty} for the toric case when $\delta=3$ and $\rho =5,6$, explicitly showing which 4-folds are product of surfaces. The second column contains the Picard number $\rho$, while in the last column with the symbol \ding{51} (resp. \ding{55}) we mean that the 4-fold is K-polystable (resp.\ not K-polystable). Table \ref{table:delta3} contains an extra column, where we write whether $\beta(\w D)$ is positive ($+ve$) or negative ($-ve$), when applicable. Note that the $4$-fold is not K-semistable when $\beta(\w D)<0$.
Finally, recall that $F'$ (resp.\ $F$) is the blow-up of $\mathbb{P}^2$ along two (resp.\ three non-collinear) points.

\begin{table}[!htbp]\caption{K-polystability of Fano 4-folds with $\delta=3$}\label{table:delta3}
\begin{tabular}{|>{$}c<{$}|>{\centering $}m{2.2em}<{$}|>{\centering $}m{2.2em}<{$}|c||}
\hline\hline
\rule{0pt}{2.7ex} \text{$4$-fold} & \rho & \beta(\w D) & K-polystable \\
\hline\hline
\multicolumn{4}{|c||}{\rule{0pt}{2.7ex}\textbf{Non-toric}} \\
\hline
\rule{0pt}{2.7ex} T=\pr^2,\ N=\ol(1) & 5 & -ve & \ding{55} \\
\hline
\rule{0pt}{2.7ex} T=\pr^2,\ N=\ol(2) & 5 & -ve & \ding{55} \\
\hline
\rule{0pt}{2.7ex} T=\pr^1\times\pr^1,\ N=\ol(0,1) & 6 & +ve& \ding{51} \\
\hline
\rule{0pt}{2.7ex} T=\pr^1\times\pr^1,\ N=\ol(1,1) & 6 & -ve & \ding{55} \\
\hline
\rule{0pt}{2.7ex} T=\mathbb F_1,\ N=\pi^*L & 6 & -ve & \ding{55} \\
\hline
\multicolumn{4}{|c||}{\rule{0pt}{2.7ex} \textbf{Toric}} \\
\hline
\rule{0pt}{2.7ex} K_1 & 5 & \text - & \ding{55} \\
\hline
\rule{0pt}{2.7ex} K_2 & 5 & \text - & \ding{55} \\
\hline
\rule{0pt}{2.7ex} K_3 & 5 & \text - & \ding{55} \\
\hline
\rule{0pt}{2.7ex} K_4 \cong \pr^2\times F & 5 & \text - & \ding{51} \\
\hline
\rule{0pt}{2.7ex} U_1 & 6 & \text - & \ding{55} \\
\hline
\rule{0pt}{2.7ex} U_2 & 6 & \text - & \ding{55} \\
\hline
\rule{0pt}{2.7ex} U_3 & 6 & \text - & \ding{55} \\
\hline
\rule{0pt}{2.7ex} U_4\cong \mathbb F_1\times F & 6 & \text - & \ding{55} \\
\hline
\rule{0pt}{2.7ex} U_5 \cong \pr^1\times\pr^1\times F & 6 & \text - & \ding{51} \\
\hline
\rule{0pt}{2.7ex} U_6 & 6 & \text - & \ding{55} \\
\hline
\rule{0pt}{2.7ex} U_7 & 6 & \text - & \ding{55} \\
\hline
\rule{0pt}{2.7ex} U_8 & 6 & \text - & \ding{51} \\
\hline
\rule{0pt}{2.7ex} F'\times F & 7 & \text - & \ding{55} \\
\hline
\rule{0pt}{2.7ex} F\times F & 8 & \text - & \ding{51} \\
\hline\hline
\end{tabular}
\end{table}

\begin{table}[!htbp]\caption{K-polystability of Fano 4-folds with $\delta\geq4$}\label{table:high-delta}
\begin{tabular}{|>{$}c<{$}|c|c||}
\hline\hline
\rule{0pt}{2.7ex} \text{$4$-fold} & $\rho$ & K-polystable \\
\hline\hline
\rule{0pt}{2.7ex} X=S\times T & \multirow{2}*{$\delta_X +\rho_T + 1$} & \multirow{2}*{\ding{51}} \\
\rule{0pt}{2.7ex} \rho_T\leq\delta_X+1,\ T\not\cong\mathbb F_1, F' & & \\
\hline
\rule{0pt}{2.7ex} X=S\times \mathbb F_1 & $\delta_X + 3$ & \ding{55} \\
\hline
\rule{0pt}{2.7ex} X=S\times F' & $\delta_X + 4$ & \ding{55} \\
\hline\hline
\end{tabular}
\end{table}

\newcommand{\etalchar}[1]{$^{#1}$}
\providecommand{\noop}[1]{}
\providecommand{\bysame}{\leavevmode\hbox to3em{\hrulefill}\thinspace}
\providecommand{\MR}{\relax\ifhmode\unskip\space\fi MR }
\providecommand{\MRhref}[2]{%
  \href{http://www.ams.org/mathscinet-getitem?mr=#1}{#2}
}
\providecommand{\href}[2]{#2}

\end{document}